\documentclass{gtpart}

\topmargin=-.3in

\usepackage[utf8]{inputenc}
\usepackage{plain}
\usepackage{booktabs}
\usepackage{amsmath, amssymb}
\usepackage{amsthm}
\usepackage{textcomp}
\usepackage{graphicx}
\usepackage{graphics}
\usepackage{import}
\usepackage{color}
\usepackage{float}
\usepackage{pdfpages}
\usepackage{changepage}
\usepackage{cleveref}
\usepackage{mathptmx}
\usepackage[english]{babel}
\usepackage{url}
\usepackage{hyperref}

\usepackage{array}
\usepackage{enumerate}
\usepackage{graphicx}
\usepackage{lmodern}
\usepackage{mathrsfs}
\usepackage{cite}
\usepackage{fancyhdr}

\theoremstyle{plain}
\newtheorem{theorem}{Theorem}
\newtheorem{lemma}{Lemma}
\newtheorem{proposition}{Proposition}

\theoremstyle{definition}
\newtheorem{definition}{Definition}
\newtheorem{example}{Example}

\theoremstyle{remark}
\newtheorem{remark}{Remark}

\newtheorem*{main*}{Main Theorem}
\newtheorem*{proposition*}{Proposition}
\newtheorem*{properties*}{Properties}

\newtheorem*{corollary*}{Corollary}
\newtheorem*{lemma*}{Lemma}
\newtheorem*{remark*}{Remark}
\newtheorem*{conj*}{Conjectures}
\newtheorem*{examples*}{Examples}
\newtheorem*{definition*}{Definition}
\def\loba{\loba}

\def\loba{\loba}
\def\loba{\hbox{\rm J\kern-1pt I}}

\def\gd{\hbox{$\bullet\kern-2pt$ --\kern-4pt---\kern-4pt--- 
\kern-2pt}}
\def\grinf{\hbox{$\bullet\kern-2pt $ ---\kern-4pt\raise
6pt\hbox{$\infty$}\kern-11pt --\kern-6pt ---\kern-5pt ---\kern+1pt}}
\def\gr #1{\hbox{$\bullet\kern-2pt $ ---\kern-3pt\raise
6pt\hbox{$#1$}\kern-8pt --\kern-6pt ---\kern-5pt ---\kern+1pt}}
\def\gra #1{\hbox{$\bullet\kern -2pt$ --\raise
8pt\hbox{$\alpha_{#1}$}\kern-13pt
---\kern-4pt\kern-4pt ---\kern-5pt ---\kern +2pt}}

\title[Salem numbers, spectral radii and growth rates of hyperbolic Coxeter 
groups]{Salem numbers, spectral radii and growth\\ rates of hyperbolic Coxeter groups}

\author{Ruth Kellerhals}
\givenname{Ruth}
\surname{Kellerhals}
\address{Department of Mathematics\\
University of Fribourg\\
CH-1700 Fribourg\\Switzerland}
\email{Ruth.Kellerhals@unifr.ch}

\author{Livio Liechti}
\givenname{Livio}
\surname{Liechti}
\address{Department of Mathematics\\
University of Fribourg\\
CH-1700 Fribourg\\Switzerland}
\email{livio.liechti@unifr.ch}

\subject{primary}{msc2020}{20F55, 11R06}
\subject{secondary}{msc2020}{22E40, 11K16}

\begin{document}

\begin{abstract}    

We show that not every Salem number appears as the growth rate of a cocompact hyperbolic Coxeter group. We also 
give a new proof of the fact that the growth rates of planar
hyperbolic Coxeter groups are spectral radii of Coxeter transformations,  
and show that this need not be the case for growth rates of hyperbolic tetrahedral Coxeter groups.

\smallskip
\noindent \textbf{Keywords.} Coxeter group, Coxeter polyhedron, Coxeter transformation, growth rate, spectral radius, Salem number.

\end{abstract}

\maketitle

\vskip1cm
{\it In memoriam Ernest B.  Vinberg}
\section{Introduction}\label{Intro}
Let $P\subset\mathbb H^n$ be a compact hyperbolic Coxeter polyhedron of dimension $n\ge2$. This means
that $P$ is a convex polyhedron bounded by $N\ge n+1$ hyperbolic hyperplanes $H_1,\ldots,H_N$ that either intersect under a dihedral angle of the form $\pi/k$ for an integer $k\ge2$ or admit a common perpendicular in $\mathbb H^n$. The group generated by the reflections $r_i$
in the hyperplanes $H_i\,,\,1\le i\le N\,,$ is a discrete group $G\subset\mathrm{Isom}\mathbb H^n$ called a \emph{(cocompact) hyperbolic Coxeter group}. When $N$ is small, their quotient spaces $\mathbb H^n/G$ give rise to hyperbolic orbifolds and manifold covers enjoying particularly nice extremality properties.
The simplest examples of hyperbolic Coxeter groups arise from Coxeter $k$-gons $P=(p_1,\ldots,p_k)\subset\mathbb H^2\,,$ for $\,k\ge3\,,$ where 
the integers $p_1,\ldots,p_k\ge 2$ satisfy $\frac{1}{p_1}+\dots+\frac{1}{p_k}<k-2$.  In particular, there exist infinitely many non-isometric Coxeter polygons.  Furthermore, a well known result of Siegel states that the hyperbolic $2$-orbifold of minimal volume originates from the Coxeter triangle $(2,3,7)$. 

A hyperbolic Coxeter group is a geometric realisation of a certain abstract Coxeter system. A Coxeter system $(W,S)$ of rank $N$ consists of a group $W$ with finite generating set $S=\{s_1,\ldots,s_N\}$ and with relations 
$(s_is_j)^{m_{ij}} = 1$ for all indices~$i,j.$
Here the integers $m_{ij}$ satisfy $m_{ii}=1$  and $m_{ij}=m_{ji}\in\{2,3,\ldots,\infty\}$, otherwise.  An exponent $m_{ij}=\infty$ indicates that the product $s_is_j$ is of infinite order.
The growth series $f_S(t)$ of $(W,S)$ is given by 
\[
f_S(t)=1+\sum\limits_{k\ge 1}\,a_kt^ k\,,
\]
where $a_k\in \mathbb{ Z}$ equals the number of words $w\in W$ with 
$S$-length $k$, and it characterises the complexity of $W$.  By a result of Steinberg, $f_S(t)$ is a rational function that
depends on the set of finite subgroups of $W$. The inverse $\tau=1/R$ of the radius of convergence $R$ of $f_S(t)$
is a real algebraic integer called the \emph{growth rate} of $W$, or also the \emph{growth rate} of its Coxeter polyhedron.

For a hyperbolic Coxeter group $G\subset\mathrm{Isom}\mathbb H^n,$ the growth rate satisfies $\tau>1$ so that $G$ is of exponential growth. More specifically, for $G$ compact, results of Floyd, Plotnick and Parry imply that $\tau$ is a Salem number or a quadratic unit when $n=2$ or $n=3$.
Recall that a Salem number is a real algebraic integer $>1$ such that all Galois conjugates have absolute value not 
greater than 1 and at least one of them has absolute value equal to 1.
An interesting example is given by the growth rate of the reflection group associated with the Coxeter triangle  $(2,3,7)$. It is equal to Lehmer's number
$\alpha_L\approx1.176281$ with minimal polynomial 
$L(t)=t^{10}+t^9-t^7-t^6-t^5-t^4-t^3+t+1$. Observe that $\alpha_L$ is the smallest Salem number known to date.

In reverse, our first main result sheds light on the realisation of Salem numbers as growth rates of hyperbolic Coxeter groups and their polyhedra.
\begin{theorem}\label{main01}
Not every Salem number is the growth rate of a compact hyperbolic Coxeter polyhedron. 
\end{theorem}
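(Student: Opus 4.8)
The plan is to exhibit a single Salem number and prove it cannot be the growth rate of any compact hyperbolic Coxeter polyhedron, in any dimension. The guiding principle is that small growth rates are scarce and rigid, whereas Salem numbers close to $1$ exist in abundance and in arbitrarily large algebraic degree: a growth rate close to $1$ should only arise from a combinatorially simple polyhedron in low dimension. I would therefore search for a Salem number $\tau^\ast$ that is small enough to lie below the threshold where growth rates become plentiful, yet has algebraic degree too large to be matched by any admissible growth series.

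First I would make this rigidity precise. By Vinberg's theorem a compact hyperbolic Coxeter polyhedron exists only for $n\le 29$, so the dimension is already bounded. Within each dimension I would establish a monotonicity statement: as the number $N$ of bounding hyperplanes grows, or as the facet angles shrink, the growth rate increases. Combined with Steinberg's formula, which expresses $f_S(t)$ through the finite parabolic subgroups and hence bounds the degree of its denominator in terms of $N$, this should yield the key lemma: there are explicit constants $c>\alpha_L$ and $D=D(c)$ such that every compact hyperbolic Coxeter polyhedron with growth rate $\tau\le c$ has the property that the minimal polynomial of $\tau$ has degree at most $D$. In the planar case this is transparent from the explicit form of $f_S(t)$ for a Coxeter $k$-gon, where growth increases with $k$ and with the $p_i$; in dimension $3$ one invokes Parry's theorem together with the known classification of small growth rates; for $4\le n\le 29$ one uses that the minimal growth rate already exceeds $c$.

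With the lemma in hand the conclusion is immediate. I would select from the tables of small Salem numbers a number $\tau^\ast$ with $\alpha_L<\tau^\ast\le c$ whose degree exceeds $D$; since Salem numbers of large degree but small absolute value are known to exist, $\tau^\ast$ cannot divide any admissible growth denominator, and hence is not the growth rate of any compact hyperbolic Coxeter polyhedron. Alternatively, if one prefers to avoid the degree count, the same lemma reduces the problem to a finite list of polyhedra with $\tau\le c$; computing their growth rates explicitly and checking that some tabulated small Salem number is absent from the list also finishes the proof.

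The main obstacle is the uniform control over all dimensions at once, i.e.\ proving the key lemma so that the bound $D(c)$ holds simultaneously for $2\le n\le 29$ and all facet numbers $N$. The planar estimates are routine, but the higher-dimensional lower bounds on growth rates---ensuring that genuinely small growth rates do not hide among the many compact polyhedra of intermediate dimension---are the delicate point, and will require either effective enumeration or a geometric lower bound, for instance via volume or via the spectral radius of the associated Gram matrix. Pinning down the explicit constant $c$ and verifying that a concrete tabulated Salem number falls into the resulting gap is then a finite numerical check.
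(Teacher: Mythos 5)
Your overall strategy---show that the set of small growth rates is too sparse to contain all small Salem numbers, then point to a tabulated Salem number in the resulting gap---is also the paper's strategy, and your closing ``alternative route'' (reduce to a finite list and check that some tabulated Salem number is missing) is essentially what the paper carries out. But both of your routes hinge on your key lemma, and there the proposal has a genuine gap. Your plan for dimensions $4\le n\le29$ is to ``use that the minimal growth rate already exceeds $c$''; this is not something one can use, because compact hyperbolic Coxeter polyhedra are not classified in dimensions $\ge5$ (for $9\le n\le 29$ it is not even known whether any exist), so no enumeration or dimension-by-dimension lower bound on growth rates is available, and Vinberg's bound $n\le29$ does not help produce one. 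The idea you are missing is a structural dichotomy that handles all $n\ge4$ at once, with no dimension bound at all: by Felikson--Tumarkin \cite{FT1}, a compact Coxeter polyhedron whose facets pairwise intersect exists only for $n\le4$ and is either a simplex or one of the seven Esselmann polyhedra---a finite list one checks directly (the five compact $4$-simplices do not even have Salem growth rates, by Cannon's criterion, and the Esselmann growth rates exceed $1.9$); any other polyhedron has two disjoint facets, hence its Coxeter system dominates a rank-$3$ system containing an $\infty$-labelled edge, and monotonicity of growth rates under domination (Lemma~\ref{Terragni}) forces its growth rate above $\tau_{[3,\infty]}>\tau_{[3,8]}$. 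Together with the identification of the second smallest planar growth rate as $\tau_{[3,8]}$ (Proposition~\ref{238}) and the known minimum $\tau_{[3,5,3]}$ in dimension $3$, this exhibits five tabulated Salem numbers, e.g.\ $\approx1.216391$, that are not growth rates of any compact hyperbolic Coxeter polyhedron.

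Second, your primary route leans on the claim that ``Salem numbers of large degree but small absolute value are known to exist'', and this is not available in the range where you need it. Any constant $c$ for which your degree bound $D(c)$ can exist must satisfy $c<\tau_{[3,\infty]}\approx 1.3247$ (the plastic number, the smallest Pisot number): the triangles $(2,3,m)$ give infinitely many distinct Salem growth rates increasing to this value, and infinitely many distinct Salem numbers below a fixed bound cannot all have degree $\le D$. But below the smallest Pisot number there is no known construction of Salem numbers of large degree---Salem's construction produces them only clustering at Pisot numbers---and it is unknown whether Salem numbers accumulate anywhere below $1.3247$; the tables stop at degree $44$. So choosing $\tau^\ast$ of degree $>D$ may be impossible for all anyone knows, unless $D$ is first shown to be small, which again requires the key lemma. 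The finite-list variant avoids this pitfall entirely, which is one more reason the paper's version of the argument is the one that closes.
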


Consider an abstract Coxeter system $(W,S)$ of rank $N$ together with its natural representation as a discrete group of reflections 
in $\mathrm{GL}_N(V)$ for a certain quadratic real vector space $V$. 
A Coxeter element  $c\in W$ is a word of $S$-length $N$ so that every generator in $S$ appears exactly once.
Its representative $C\in\mathrm{GL}_N(V)$ is called a \emph{Coxeter transformation}. By means of its order and the eigenvalue spectrum one can decide about the nature of $W$. For example, $W$ is finite if and only if $C$ is of finite order; see \cite{How}.

For integers $p_1,\ldots,p_k\ge 2$, consider the \emph{star graph} $\mathrm{Star}(p_1,\ldots,p_k)$
given by the tree with one vertex of valency $k$ that has $k$ outgoing paths of respective
lengths $p_i-1$.  Such a graph describes a Coxeter system $(W,S)$ as follows. Each node $s$ of the graph 
yields a generator in $S$, and the relations of $W=\langle S\rangle$ are defined by $s^2=1$, by
$(st)^3=1$ if the nodes $s,t$ are joined by an edge, and by $(st)^2=1$, otherwise.

Our second main result establishes a bridge between the growth rates of reflection groups
of planar hyperbolic Coxeter groups and the spectral radii of Coxeter transformations of suitably parametrised star graphs.
\begin{theorem}
\label{main02}
Let~$k\ge3$, $p_1,\dots,p_k\ge2$ be integers with $\frac{1}{p_1}+\dots+\frac{1}{p_k}<k-2$.
Then the growth rate of the reflection group given by the compact Coxeter $k$-gon $(p_1,\dots,p_k)$ in $\mathbb H^2$ equals the spectral radius of the Coxeter transformation of~$\mathrm{Star}(p_1,\dots,p_k)$.
\end{theorem}
This result is implicitly stated in the work of E. Hironaka~\cite{Eko03} and based on a connection to Alexander polynomials of pretzel links and the theory of fibered 
knots and links~\cite{Eko,Eko04}. Our method of proof of Theorem~\ref{main02} is different
and does not use any topology. Instead, we provide and exploit explicit recursion formulas.

Conversely, for dimensions $n>2$, we show that not every growth rate of a compact Coxeter polyhedron in $\mathbb H^n$
is equal to the spectral radius of a Coxeter transformation. In fact, in contrast to the Coxeter tetrahedron with Coxeter symbol $[4,3,5]$, the
growth rate of the Coxeter tetrahedron $[3,5,3]$ is not equal to the spectral of a Coxeter transformation.  Note that
the tetrahedron $[4,3,5]$ has minimal volume among all hyperbolic Coxeter tetrahedra while $[3,5,3]$ has minimal growth rate among all Coxeter polyhedra in $\mathbb H^3$.

The paper is organised as follows. In Section~\ref{section2}, we recall in a first part~\ref{section2-1} the essential concepts of Coxeter group, Coxeter graph and Coxeter transformation. In the second part~\ref{section2-2}, we describe hyperbolic Coxeter polyhedra and their associated reflection groups.  Some examples
provide a glimpse into the wealth of hyperbolic Coxeter groups which--in contrast to the spherical and affine 
Coxeter groups--are far from being classified; see \cite{V1} and \cite{F-web}, for example. In Section \ref{section3}, we review the basic notions of growth series and growth rate of a Coxeter system. The partial order on the set of Coxeter systems
and its implication for growth rates and minimality of certain Coxeter systems are presented. These aspects will be useful tools in some of our proofs. This first part \ref{section3-1} is completed by a brief discussion of the connection of certain growth rates with Salem numbers. In part \ref{section3-2}, we identify the planar hyperbolic Coxeter group having the second smallest growth rate
as a preparation to prove Theorem \ref{main01}. Section \ref{spectral} is devoted to the proof of Theorem \ref{main02}, and in Section \ref{growthspectral}, we show that not every growth rate of a hyperbolic Coxeter group is the spectral radius of a Coxeter transformation.

\subsection*{Acknowledgements} 
The authors would like to thank Yohei Komori for some helpful comments on an earlier draft of the article. 
The first author was partially supported by Schweizerischer Nationalfonds 200020-172583.

\newpage

\section{Geometric Coxeter groups}\label{section2}
\subsection{Coxeter groups and Coxeter elements}\label{section2-1}
A \emph{Coxeter system} $(W,S)$ is a group $W$ with finite generating set $S=\{s_1,\dots,s_N\}$ 
and with the relations~$(s_is_j)^{m_{ij}} = 1$ for all indices~$i,j,$
where the integers $m_{ij}$ satisfy $m_{ii}=1$  and $m_{ij}=m_{ji}\in\{2,3,\ldots,\infty\}$, otherwise. 
Here, $m_{ij}=\infty$ means that the product $s_is_j$ is of infinite order. The group $W$ is a \emph{Coxeter group of rank~N}. 
Given a Coxeter system~$(W,S)$, the corresponding \emph{Coxeter diagram} $\Gamma$ is the weighted graph whose vertices~$v_1,\dots, v_N$
correspond to the generators~$s_1,\dots,s_N$, and an edge of weight~$m_{ij}$ joins~$v_i$ to~$v_j$ when~$m_{ij}\ge3$. 
Edge weights are typically omitted if they equal~$3$. 

Coxeter groups admit a canonical geometrical representation. Let~$V$ be a real vector space with basis~$e_1,\dots,e_N$, where the vector~$e_i$ corresponds to the generator~$s_i$ of~$S$, say. 
Equip $V$ with the symmetric bilinear form~$B$ defined by 
\begin{equation}\label{eq:product1}
B(e_i,e_j)=\left\{
\begin{array}{ll} -\cos(\pi/m_{ij})&m_{ij}<\infty\,;\\
-1&m_{ij}=\infty\,.\\
\end{array}
\right.
 \end{equation} 
The geometric representation~$\rho:W\to\mathrm{GL}_N(V)$ defined by~\[\rho(s_i)(v) = v - 2B(e_i,v)e_i\quad(v\in V)\]
associates to each generator $s_i$ the corresponding reflection $r_i=\rho(s_i)$
with respect to the subspace $H_i=\{v\in V\mid B(e_i,v)=0\}$. The map $\rho$ preserves the form~$B$ and is faithful with discrete image. 
In this context, it is not difficult to see that the group $W$ is finite if and only if the form $B$ is positive definite. Suppose that $(W,S)$ is irreducible, that is, its Coxeter diagram $\Gamma$ is connected. Then, $W$ is called \emph{spherical} or \emph{affine}
if the form $B$ is of signature $(N,0)$ or $(N-1,0)$, respectively.  The spherical and affine Coxeter groups are completely classified for arbitrarily large $N$ (see \cite{Hum} for details).

Let $(W,S)$ be a Coxeter system of rank $N$.  A word~$c\in W$ of $S$-length $N$ so that every generator in $S$ appears exactly once is called a \emph{Coxeter element}. If the Coxeter diagram $\Gamma$ is a tree, then the different Coxeter elements form a single conjugacy class
in $W$ by a result of Steinberg~\cite{Steinberg59}.
The image~$C=\rho(c)\in \mathrm{GL}_N(V)$ of a Coxeter element $c\in W$ is called a \emph{Coxeter transformation}. 
By means of its order and its eigenvalues one can decide about the nature of $W$; see \cite{AC76} and \cite{How}.
In particular, the Coxeter group $W$ is finite and spherical if and only if the order of $c$ is finite.
Consider the \emph{Coxeter adjacency matrix} $A=2\,I-2B$ of the Coxeter diagram $\Gamma$.
If~$\Gamma$ is a tree and~$\alpha$ is the leading eigenvalue of $A$, 
then the spectral radius~$\lambda$ of the Coxeter transformation $C$ satisfies the equation
\begin{equation}\label{eq:eigen-spectral}
\alpha^2 = 2 + \lambda+\lambda^{-1}\,.
\end{equation}
For a reference, see McMullen~\cite{McM02}.
We note that if all the weights of the Coxeter diagram $\Gamma$ are equal to~$3$, then the Coxeter adjacency matrix $A$ equals the 
adjacency matrix of the underlying abstract graph.
\subsection{Hyperbolic Coxeter groups}\label{section2-2}
Let $\mathbb H^n$ denote the standard hyperbolic $n$-space in its linear model 
\[
\mathbb H^n=\{x\in\mathbb R^{n+1}\mid q_{n,1}(x)=x_1^2+\dots+x_n^2-x_{n+1}^2=-1\,,\,x_{n+1}>0\}\,.
\]
In this setting, a hyperbolic hyperplane $H$ is given by the intersection of $\mathbb H^n$ with the Lorentzian-orthogonal complement $e^L$ of a (space-like) vector $e\in\mathbb R^{n+1}$
normalised to be of norm $q_{n,1}(e)=1$. The reflection $r=r_H$ with respect to the hyperbolic hyperplane $H$ of $\mathbb H^n$ 
is given by $\,x\mapsto r(x)=x-2\,\langle e,x\rangle_{n,1}\,e\,$
where $\langle x,y\rangle_{n,1}$ denotes the bilinear form of signature $(n,1)$ associated with $q_{n,1}$.
The isometry group of $\mathbb H^n$ is given by the group $\mathrm{O}_\circ(n,1)$ of positive Lorentzian matrices; see \cite[Chapter 3]{Rat-book}. Notice that each isometry is a finite composition of reflections in hyperbolic hyperplanes. 

A \emph{Coxeter polyhedron} $P\subset\mathbb H^n$ is a convex polyhedron all of whose dihedral angles are of the form
$\pi/m$ for an integer $m\in\{2,3,\ldots,\infty\}$.  We always assume that $P$ is of finite volume and hence bounded by finitely many hyperplanes $H_1,\dots,H_N$ with $N\ge n+1$. 
Represent each hyperplane $H_i=e_i^L$ by a unit normal vector $e_i$
directed away from $P$ so that the half-space $H_i^-=\{x\in\mathbb H^n\mid \langle x,e_i\rangle_{n,1}\le 0\}$ contains~$P$. In particular, $P=\cap
_{i=1}^N H_i^-$.
For $i=1,\ldots,N$, the reflections $r_i$ with respect to $H_i$ generate a discrete group
$G=(G,S)$ of hyperbolic isometries with generating set $S=\{r_1,\ldots,r_N\}$.
The elements of $S$ satisfy $r_i^2=1$ and, for $i\not=j$, the rotation relations 
$\,(r_ir_j)^{m_{ij}} = 1$ if $m_{ij}<\infty$. In particular, the exponents $m_{ij}$ are symmetric with respect to $i,j$, and when finite, they are closely related to the dihedral angles formed by $H_i,H_j$ when intersecting in $\mathbb H^n$.
Products $r_ir_j$ of infinite order can be described in a geometric way as well (see below).
As a consequence, the group $G$ is  a Coxeter group which we call a \emph{hyperbolic Coxeter group}. 
Furthermore, the group $G$ can be described by a Coxeter diagram $\Gamma$ as above.

Consider the Gram matrix $\mathrm{Gram}(P)=\big(\langle e_i,e_j\rangle_{n,1}\big)$ whose entries are described as follows.
\begin{equation}
\label{eq:product2}
\langle e_i,e_j\rangle_{n,1}=\left\{
\begin{array}{ll}
-\cos(\pi/m_{ij})&\textrm{if $H_i,H_j$ intersect at $\pi/m_{ij}$ in $\mathbb H^n$};\\
-1&\textrm{if $H_i,H_j$ meet at $\partial\mathbb H^n$ forming the angle $0$};\\
-\cosh(l_{ij})&\textrm{if $H_i,H_j$ are at distance $l_{ij}>0$ in $\mathbb H^n$}.\\
\end{array}
\right.
\end{equation}
Many combinatorial and geometric features of $P$ can be read off from its Gram matrix $\mathrm{Gram}(P)$; see \cite{V1}. For example, the polyhedron $P$ is a compact $n$-simplex if $\mathrm{Gram}(P)$ is an indecomposable and invertible matrix of signature $(n,1)$ such that all principal submatrices are positive definite.  

In view of~(\ref{eq:product2}), a product $r_ir_j$ is of infinite order if the hyperplanes $H_i,H_j$ are (hyperbolic) parallel or at distance $l_{ij}$ in $\mathbb H^n$.
We take this additional information into account and describe the Coxeter polyhedron $P$ and its Coxeter group $G=(G,S)$ by means of their Coxeter diagram $\Gamma$ as follows. If $H_i,H_j$ meet at $\partial\mathbb H^n$, then we join the nodes $v_i,v_j$ by a bold edge (omitting the weight $\infty$); if $H_i,H_j$ are at distance $l_{ij}>0$ in $\mathbb H^n$, then $v_i,v_j$ are joined by a dotted edge (usually without the weight $l_{ij}$). In \cite[Theorem A]{FT1},
Felikson and Tumarkin showed that the Coxeter diagram of a compact Coxeter polyhedron in $\mathbb H^n$
has  always a dotted edge if $n\ge5$ (compare also with Example \ref{ex3} below).

In the case that the Coxeter polyhedron $P$ is bounded by only a few hyperplanes, its description by the {\it Coxeter symbol} is more convenient. For example, $[p_1,\ldots,p_k]$ or $[q_1,\ldots,q_l,\infty]$ with integer labels $p_i,q_j\ge3$ are associated with linear Coxeter diagrams
with $k$ or $l+1$ edges marked by the respective weights. 
The Coxeter symbol $[(p^k,q)]$ describes a cyclic Coxeter diagram with $k\ge1$ consecutive edge weights $p$ followed by the weight $q$; see \cite[Appendix]{JKRT1}, for example. 

\begin{example}\label{ex1}
Let $k\ge3$ and $p_1,\ldots,p_k\ge 2$ be integers.  A compact Coxeter $k$-gon $P=(p_1,\dots,p_k)$ 
with interior angles $\pi/p_1,\ldots,\pi/p_k$ exists in $\mathbb H^2$ if and only if its (normalised) angle sum satisfies 
$\frac{1}{p_1}+\dots+\frac{1}{p_k}<k-2$. 
\end{example}

\begin{example}\label{ex2}
As in the 2-dimensional case, there are infinitely many non-isometric compact Coxeter polyhedra in $\mathbb H^3$. Their description is not of the same elementary nature as in Example \ref{ex1} but there is a complete characterisation due to Andreev; see \cite{Roeder}, for example.
\end{example}

\begin{example}\label{ex3}
Compact Coxeter $n$-simplices were classified by Lann\'er and exist for $n\le 4$, only (see \cite[Part II, Chapter 5]{VinbergII}).  Of particular interest will be the Coxeter tetrahedra given by the symbols $[3,5,3]$ and $[4,3,5]$. Due to work of Koszul and Chein, non-compact Coxeter $n$-simplices are classified as well and exist for $n\le 9$.  All their volumes are computed in \cite{JKRT1}.
\end{example}

\begin {example}\label{ex4}
Compact Coxeter polyhedra with mutually intersecting bounding hyperplanes exist in $\mathbb H^n$ for $n\le 4$, only. Such a polyhedron is either a simplex or one of the seven Esselmann polyhedra. These results are due to Felikson and Tumarkin \cite[Theorem A]{FT1}.  For more detailed information, see \cite{F-web}.
\end{example}

\section{Salem numbers and growth rates}\label{section3}
\subsection{Growth series and growth rates}\label{section3-1}
For a Coxeter system $(W,S)$ with generating set $S=\{s_1,\ldots,s_N\}$ we introduce the notion and review the relevant properties of the growth series of $(W,S)$; for references, see \cite{AC76}, \cite{K336} and \cite{KK}.
The growth series $f_S$ of $W$ is given by
\[
f_S(t)=\sum\limits_{w\in W}\,t^{l_S(w)}=1+\sum\limits_{k\ge 1}\,a_kt^ k\,,
\]
where $a_k\in \Bbb Z$ equals the number of words $w\in W$ with 
$S$-length $k$. 
By Steinberg's formula, 
\begin{equation}\label{eq:Steinberg}
\frac{1}{f_S(t^ {-1})}=\sum\limits_{{W_T<W\atop\scriptscriptstyle{{\vert W_T\vert<\infty}}}}\,
\frac{(-1)^{\vert T\vert}}{f_T(t)}\,,
\end{equation}
where $W_T\,,\,T\subset S\,,$ is a finite Coxeter subgroup of $W$, and where $W_{\varnothing}=\{1\}$.
By a result of Solomon, 
the associated growth polynomials $f_T$ are given explicitly in terms of their exponents $n_1,\ldots,n_p$ according to
\begin{equation}\label{eq:Solomon}
f_T(t)=\prod\limits_{i=1}^ {p}\,[n_i+1]\,.
\end{equation}
Here we use the standard notations $\,[l]:=1+t+\dots+ t^{l-1}$. By replacing $t$ by $t^{-1}$, one gets $\,[l](t)=t^{l-1}[l](t^{-1})$.

For the exponents $\,n_1=1,n_2,\ldots,n_{p}\,$ of $G_T$, 
see \cite[Section 9.7]{CoMo}, for example. 
In particular, the dihedral group $D_2^ l\,,\,l\ge2\,,$ has exponents $\,1,l-1\,$ and growth polynomial $[2][l]$. 
As a consequence, in its disk of convergence, the growth series $f_S(t)$ of a Coxeter system $(W,S)$ is a rational function and quotient of 
coprime monic polynomials $p(t),q(t)\in\mathbb Z[t]$ of equal degree. The \emph{growth rate} $\tau_W=\tau_{(W,S)}$ is defined by
\[
\tau_{W}=\limsup\limits_{k\rightarrow\infty}\sqrt[k]{a_k}\,
\]
and equals the inverse of the radius of convergence $R$ of $f_S(t)$. 

Growth rates satisfy a nice monotonicity property on the partially ordered set of Coxeter systems. 
For two Coxeter systems $(W,S)$ and $(W',S')$, one declares $(W,S)\le(W',S')$ if there is an injective map $\iota:S\rightarrow S'$ such that $m_{st}\le m'_{\iota(s)\iota(t)}$ for all $s,t\in S$. If $\iota$ extends to an isomorphism between $W$ and $W'$, one writes $\,(W,S)\simeq(W',S')$, and $\,(W,S)<(W',S')$ otherwise. In the latter case, we often say that the Coxeter system $(W',S')$ \emph{dominates} the system $(W,S)$.
This partial order satisfies the descending chain condition since $m_{st}\in\{2,3,\ldots,\infty\}$ where $s\not=t$. In particular,
any strictly decreasing sequence of Coxeter systems is finite, which, in the hyperbolic case, leads to the notion of \emph{minimal hyperbolic Coxeter systems}; see \cite{McM02}.
In this context, we shall exploit the following result of Terragni \cite[Section 4.3]{Terragni}.
\begin{lemma}\label{Terragni}
If $\,(W,S)\le(W',S')$, then  $\tau_{(W,S)}\le\tau_{(W',S')}$.
\end{lemma}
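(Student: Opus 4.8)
The plan is to compare the two growth series coefficient by coefficient, which yields a non-strict inequality of growth rates, and then to upgrade this to a strict inequality by an eigenvalue argument.

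First I would establish that domination forces a termwise inequality of growth series. Writing $f_S(t)=1+\sum_{k\ge1}a_k t^k$ and $f_{S'}(t)=1+\sum_{k\ge1}a'_k t^k$, the claim is that $a_k\le a'_k$ for every $k$. It suffices to treat the two elementary moves out of which every domination $\iota$ is composed: adjoining a new generator, and raising a single exponent $m_{st}$ by one (allowing the value $\infty$). In the first case $(W,S)$ is realised as a standard parabolic subgroup of $(W',S')$, and the classical fact that the inclusion of a parabolic subgroup preserves word length shows that the length-$k$ elements of $W$ are literally a subset of those of $W'$, whence $a_k\le a'_k$. In the second case one checks that a word which is reduced with respect to the smaller exponents stays reduced after the exponent is enlarged---enlarging $m_{st}$ only lengthens the corresponding braid relation and so removes, never creates, opportunities to shorten a word---so the identity on letters gives an injection of length-$k$ elements. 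Termwise domination gives $f_S(t)\le f_{S'}(t)$ on the common interval of convergence; since both series have non-negative coefficients, Pringsheim's theorem identifies each radius of convergence $R,R'$ with a genuine pole on the positive real axis, and as $f_S\to\infty$ when $t\uparrow R$ while $f_{S'}\ge f_S$, the series $f_{S'}$ cannot converge past $R$, so $R'\le R$, that is $\tau_{(W,S)}\le\tau_{(W',S')}$.

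The strict inequality is the heart of the matter. Here I would encode the counting of reduced words by a non-negative integer transition matrix---for instance the matrix of the finite-state automaton recognising the language of ShortLex-reduced words, or equivalently the matrix governing the linear recursion for the $a_k$ that one reads off from the rational function \eqref{eq:Steinberg}--\eqref{eq:Solomon}. For such an encoding the growth rate $\tau$ is precisely the spectral radius of this matrix. Reducing again to a single elementary step, the transition matrix of $(W,S)$ sits inside that of $(W',S')$ as an entrywise-dominated submatrix, with at least one strict inequality coming from the strictness of the domination. The Perron--Frobenius theorem then delivers the conclusion: for an irreducible non-negative matrix, a strict entrywise increase strictly raises the spectral radius, so $\tau_{(W,S)}<\tau_{(W',S')}$.

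The main obstacle is twofold. Arranging the two automata on a common, comparable index set so that the entrywise inequality is literally valid is delicate, and this is exactly why the reduction to a single elementary move is essential. More seriously, the Perron--Frobenius step requires the transition data to be irreducible, and irreducibility can genuinely fail: adjoining a generator that commutes with all others multiplies $f_S$ by the polynomial $1+t$, leaving the smallest pole---and hence the growth rate---unchanged. The strict conclusion therefore belongs to the connected, indefinite regime, which is precisely the setting of the hyperbolic Coxeter groups under consideration and in which the relevant transition matrix is irreducible; verifying this irreducibility, and thereby excluding the degenerate reducible exceptions, is the step I expect to demand the most care.
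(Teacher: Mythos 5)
You should first be aware that the paper itself contains no proof of Lemma~\ref{Terragni}: the statement is imported wholesale from Terragni's work, so your attempt has to stand on its own rather than be measured against an argument of the authors. The first half of your plan has the right skeleton for the \emph{non-strict} inequality: coefficientwise domination $a_k\le a_k'$ of the growth series is indeed what underlies the cited monotonicity, and the parabolic-subgroup argument for the move of adjoining a generator is correct. But the other elementary move -- raising a single exponent $m_{st}$ -- is defended only heuristically, and it is precisely the hard part. Once $m_{st}<m'_{st}$ there is in general no group homomorphism $W\to W'$ or $W'\to W$ compatible with the generators (either direction would force a divisibility relation between the exponents), so ``the identity on letters gives an injection of length-$k$ elements'' does not define any map of groups; making it precise, via Tits' solution of the word problem or normal forms, is the actual content of the theorem being cited. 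Moreover, your elementary moves $m\mapsto m+1$ never reach $m'=\infty$ in finitely many steps; one needs the additional (true, but unstated) stabilisation fact that for fixed $k$ the coefficient $a_k$ is unaffected by exponents exceeding $k$, so that they may be treated as $\infty$. Finally, the appeal to Pringsheim is both inaccurate (the singularity at $t=R$ need not be a pole, and a series with non-negative coefficients need not blow up at its radius of convergence) and unnecessary: $a_k\le a_k'$ for all $k$ gives $R'\le R$ immediately.

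The second half of your plan cannot be completed, because the strict statement is false in the generality in which the lemma is formulated -- and your own closing remark is a counterexample to it, not merely a technical obstruction to your method. Adjoining to $S$ a new generator $u$ commuting with all of $S$ produces, in the paper's partial order from Section~\ref{section3-1}, a \emph{strict} domination $(W,S)<(W\times A_1,\,S\cup\{u\})$: the inclusion of generating sets satisfies the exponent condition but does not extend to an isomorphism. Yet the growth series is only multiplied by $1+t$, so the growth rates coincide; concretely, $[3,7]<[3,7]\times A_1$ and both have growth rate $\alpha_L$. (Strict dominations between spherical systems, where both growth rates vanish, do the same job.) Consequently no Perron--Frobenius argument, however the automata are arranged, can produce strictness without extra hypotheses such as connectedness and indefiniteness of both diagrams; those hypotheses appear neither in the statement nor, verified, in your sketch, and the difficulty you yourself flag -- realising the two transition matrices on a common state set with an entrywise, irreducible domination -- is left entirely open. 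Note that the paper needs much less than the lemma claims: in Theorem~\ref{main1} it is applied only along chains $[3,8]<[3,\infty]\le\Gamma_{uvw}\le\Gamma$ of connected hyperbolic diagrams, where non-strict monotonicity together with the directly computable strict inequality $\tau_{[3,8]}<\tau_{[3,\infty]}$ suffices. A correct strict lemma would have to build in hypotheses of that kind and verify them at each application.
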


\begin{example}\label{growth-monotonicity}
Instead of defining and ordering (abstract) Coxeter systems, we indicate their ordering on the level of Coxeter graphs.
Consider the Coxeter graphs ordered according 
\begin{figure}[H]
\centering
\begin{picture}(130,30)
    \put(0,15){\circle*{4}}
    \put(15,15){\circle*{4}}
    \put(30,15){\circle*{4}}
    \put(0,15){\line(1,0){30}}
    
    \put(60,15){\circle*{4}}
    \put(75,15){\circle*{4}}
    \put(90,15){\circle*{4}}
    \put(60,15){\line(1,0){30}}
    
    \put(120,22){\circle*{4}}
    \put(140,22){\circle*{4}}
    \put(130,7){\circle*{4}}
    \put(120,22){\line(1,0){20}}
     \put(131,7){\line(3,5){10}}
     \put(129,7){\line(-3,5){10}}   

 
    \put(40,12){$\le$}     
    \put(100,12){$\le$} 
       
    \put(20,18){$\scriptstyle 8$}  
    \put(78,19){$\scriptstyle \infty$}  
    \put(126,26){$\scriptstyle \infty$}        
\end{picture}
\caption{The naturally ordered Coxeter systems $[3,8]\,\le\,[3,\infty]\,\le\,[(3^2,\infty)]\,$.}
\label{fig:subgraph1}
\end{figure}
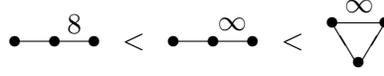
Then, by Lemma \ref{Terragni}, we deduce that $\tau_{[3,8]}\le\tau_{[3,\infty]}\le\tau_{[(3^2,\infty)]}$.
\end{example}
Consider a hyperbolic Coxeter group $G=(G,S)$ with set $S$ of generating 
reflections in hyperplanes of $\mathbb H^n$ as usual. 
By \cite[Corollary, p. 376]{CD1991}, if the Coxeter polyhedron $P$ of $G$ is compact, then the rational function $f(t)=f_S(t)$ is reciprocal (resp.\ anti-reciprocal) for $n$ even (resp.\ $n$ odd), that is,
\begin{equation}\label{eq:reciprocal}
f(t^ {-1})=\left\{
\begin{array}{ll}
 f(t)&\textrm{for $n\equiv0\,(2)$};\\
-f(t)&\textrm{for $n\equiv1\,(2)$}.\\
\end{array}
\right.
\end{equation}
As a consequence of (\ref{eq:Steinberg})--(\ref{eq:reciprocal}), the growth function of a compact hyperbolic Coxeter polygon $P=(p_1,\ldots,p_k)\subset\mathbb H^2$ can be determined as follows.
Since 
\begin{equation}\label{eq:f-planar}
\frac{1}{f(t)}=\frac{1}{f(t^{-1})}=1-\frac{k}{[2]}+\sum\limits_{i=1}^k\frac{1}{[2][p_i]}\,,
\end{equation}
one deduces that (see also \cite[Section 2]{Floyd})

\begin{equation}\label{eq:Delta}
f(t)=:\frac{[2]\,[p_1]\cdots[p_k]}{\Delta_{p_1,\ldots,p_k}(t)}=\frac{[2]\,[p_1]\cdots[p_k]}{[2]\,[p_1]\cdots[p_k]-\sum\limits_{i=1}^k\,t\,[p_1]\cdots[p_i-1]\cdots[p_k]}\,.
\end{equation}
In general, the growth rate $\tau_G=\tau_P=1/R$ of $G$ (and of $P$) is an algebraic integer which, 
by results of Milnor and de la Harpe, is always strictly bigger than 1. 
By results of Floyd, Plotnick and Parry (see also \cite{Komori}), the growth rate $\tau_P$ of a compact Coxeter polyhedron $P\subset\mathbb H^n$ with $n=2$ and $n=3$ is either a quadratic unit or a \emph{Salem number}, that is, $\tau_P$ is
a real algebraic integer $>1$ all of whose Galois conjugates have absolute value not greater than 1 and at least one of them has absolute value equal to 1.

The smallest Salem number known to date is Lehmer's number $\alpha_L\approx1.176281$ with minimal polynomial 
$L(t)=t^{10}+t^9-t^7-t^6-t^5-t^4-t^3+t+1$. By a result of E. Hironaka \cite{Eko} (see also \cite{KK}),
Lehmer's number $\alpha_L$ is the minimal growth rate~$\tau_1$ among all $\tau_P$ with 
$P$ a compact hyperbolic Coxeter polygon, and it is realised by the triangle $P=(2,3,7)$ in a unique way.
In this context, recall Siegel's result that the associated Coxeter group $[3,7]$ yields the (unique) minimal volume quotient space among all 
hyperbolic 2-orbifolds of finite volume; the second smallest hyperbolic 2-orbifold is given by the compact quotient space $\mathbb H^2/[3,8]$. In comparison with $\alpha_L$, the growth rate $\tau_2$ of the triangle group $[3,8]$ has minimal polynomial $t^{10}-t^7-t^5-t^3+1$ and is $\approx1.230391$.
By looking at the complete list $\mathrm{(L)}$ of all Salem numbers of degree $\le 44$, which is due to Boyd, 
Mossinghoff and others (for a survey, see \cite{Smyth}; for the list $\mathrm{(L)}$, see \cite{Moss}), the growth rate $\tau_2$
is the seventh smallest Salem number in the list $\mathrm{(L)}$, only.

For compact Coxeter polyhedra $P\subset \mathbb H^3$, the smallest growth rate  has been determined by Kellerhals and Kolpakov in \cite{KK}. It is realised by the Coxeter group $[3,5,3]$ in a unique way and of value $\approx1.350980$ with minimal polynomial $t^{10}-t^9-t^6+t^5-t^4-t+1$. In this way, $\tau_{[3,5,3]}$ is bigger than the first 47 smallest Salem numbers as listed in $\mathrm{(L)}$. 

\begin{remark}\label{435}
It is interesting to compare the compact Coxeter tetrahedra $[3,5,3]$ and $[4,3,5]$.  In contrast to the growth rate, the volume of $[4,3,5]$ is smaller than the one of $[3,5,3]$; see \cite[Appendix]{JKRT1}. However, the Coxeter diagram of $[3,5,3]$ has an internal symmetry, and by results of Martin and his co-authors (see \cite{Martin} and the references therein), 
the quotient of $\mathbb H^3$ by the $\mathbb Z_2$-extension of the group $[3,5,3]$ has smallest volume among \emph{all} hyperbolic 3-orbifolds.
\end{remark}

Note that for higher dimensional Coxeter polyhedra $P\subset\mathbb H^n,\,n\ge4\,,$ there are many examples 
whose growth rates are \emph{not} Salem numbers anymore. 
A simple example is given by the compact right-angled 
$120$-cell $\mathcal C\subset \mathbb H^4$ with ${\mathfrak f_0}=600$ vertices and ${\mathfrak f_3}=120$ dodecahedral facets. As a consequence of \cite[Proposition 3.2]{KP}, the growth function $f(t)$ of $\mathcal C$ is given by
\[
f(t)=
\frac{[2]^4}{t^4-116\,t^3+366\,t^2-116\,t+1}\,\,,
\]

whose denominator polynomial is irreducible over $\mathbb Z$ with two inversive pairs of positive real roots (see Section for more details). In particular, the growth rate $\tau_{\mathcal C}\approx112.763387$ is not a Salem number.
In \cite{Umemoto}, Umemoto constructed an infinite sequence of non-isometric $4$-dimensional compact Coxeter polyhedra whose growth rates are \emph{real $2$-Salem numbers}. These are algebraic integers $\alpha>1$ which have exactly 
one conjugate $\beta$ outside the closed unit disk, and at least one conjugate on the unit circle. Then all other conjugates of $\alpha$ different from $\alpha^{-1}$, $\beta$ and $\beta^{-1}$ lie on the unit circle.  As in the case of Salem numbers and their minimal polynomials, called \emph{Salem polynomials}, the minimal polynomial or \emph{2-Salem polynomial} of $\alpha$ is an irreducible palindromic polynomial of even degree.  

\begin{remark}\label{2Salem}
In general, it is a difficult problem to decide whether a palindromic monic polynomial $p(t)\in\mathbb Z[t]$ 
is irreducible. Specifically, for a palindromic monic polynomial with four simple roots that are positive real and the other roots on the unit circle, 
it is difficult to decide whether it is a 2-Salem polynomial or splits into two Salem polynomials and possibly cyclotomic polynomials over $\mathbb Z$.  
\newline
In \cite[Theorem 6.3, Theorem 7.1]{Cannon} (see also\cite[Theorem 2.12]{CaWag}), Cannon provides a necessary and sufficient condition for $p(t)$ to be a Salem polynomial, and he showed that the growth rate of a compact hyperbolic 4-simplex is \emph{not} a Salem number.
\end{remark}

\begin{remark}\label{Perron}
Salem numbers and real 2-Salem numbers are special \emph{Perron numbers}. 
A Perron number is a real algebraic integer $>1$ all of
whose conjugates are of strictly smaller absolute value. In \cite{KP}, Kellerhals and Perren formulate a conjecture 
which can be stated in a modified way as follows: \emph{For every $n\ge2$, the growth rate of a hyperbolic Coxeter $n$-polyhedron
is a Perron number}. By means of the software package CoxIter \cite{CoxIter} and its webversion, both due to Guglielmetti, one verifies  that the conjecture is true for all known hyperbolic Coxeter polyhedra of finite volume.
\end{remark}

\subsection{Not every Salem number appears as a growth rate}\label{section3-2}
With these preliminaries we are now ready to prove that not every Salem number is the growth rate of a hyperbolic Coxeter group.
As a first step, we consider hyperbolic Coxeter polygons whose growth rates are small Salem numbers and prove the following result. 
\begin{proposition}\label{238}
The second smallest growth rate 
$\tau_2$ of a compact Coxeter polygon in $\mathbb H^2$ is realised in a unique way by the triangle 
with Coxeter symbol $[3,8]$.
The Salem number $\tau_2\approx 1.230391$ has
minimal polynomial $t^{10}-t^7-t^5-t^3+1$ and is the seventh smallest Salem number in the list $\mathrm{(L)}$.
\end{proposition}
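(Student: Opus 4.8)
The plan is to combine the growth-monotonicity of Lemma~\ref{Terragni} with an explicit computation through the denominator polynomial in \eqref{eq:Delta}, so as to reduce the a priori infinite competition to a short, checkable list of polygons. Since $(2,3,7)=[3,7]$ already realises the unique minimal growth rate $\tau_1=\alpha_L$, the task is to determine $\min\{\tau_P : P\neq(2,3,7)\}$ and to show it is attained only by $(2,3,8)$.

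First I would classify the \emph{minimal} compact Coxeter polygons for the partial order $\le$, i.e.\ those admitting no strictly smaller compact hyperbolic polygon. A short elementary analysis of the angle condition $\sum 1/p_i<k-2$ shows that the minimal triangles are exactly $(2,3,7)$, $(2,4,5)$ and $(3,3,4)$, that $(2,2,2,3)$ is the unique minimal quadrilateral, and that for $k\ge5$ the unique minimal $k$-gon is the right-angled one $(2,\dots,2)$. Sorting the angle data, and using that any map realising $\le$ must send $\infty$-labelled (non-adjacent) pairs to $\infty$-labelled pairs, I would then verify that every compact Coxeter polygon other than a triangle $(2,3,c)$ with $c\ge7$ dominates one of $(2,4,5)$, $(3,3,4)$, $(2,2,2,3)$, or a right-angled $k$-gon. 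By Lemma~\ref{Terragni} it then suffices to (i) handle the chain $(2,3,7)<(2,3,8)<(2,3,9)<\cdots$, where monotonicity immediately gives $\tau_{(2,3,8)}<\tau_{(2,3,c)}$ for all $c\ge9$, and (ii) bound the finitely many minimal competitors $(2,4,5)$, $(3,3,4)$, $(2,2,2,3)$ and the right-angled family from below.

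For the quantitative part I would feed each $p=(p_1,\dots,p_k)$ into \eqref{eq:Delta}, factor out the cyclotomic part of the denominator $\Delta_p$, and read off the growth rate as the largest real root of the remaining reciprocal factor. For $(2,3,8)$ one finds $\Delta=(t+1)(t^{10}-t^7-t^5-t^3+1)$, so $\tau_2$ is the largest root of the Salem polynomial $t^{10}-t^7-t^5-t^3+1\approx1.230391$, which proves the stated value once its irreducibility is recorded. The competitors yield $\Delta_{(2,4,5)}=(t+1)(t^{8}-t^{5}-t^{4}-t^{3}+1)$ with largest root $\approx1.28$, the reciprocal factor $t^8+t^7-2t^5-3t^4-2t^3+t+1$ for $(3,3,4)$ with largest root $\approx1.40$, $\Delta_{(2,2,2,3)}=(1+t)^2(t^4-t^3-t^2-t+1)$ with largest root $\approx1.722$, and for the right-angled $k$-gon $\Delta=(1+t)^{k-1}\big(t^2-(k-2)t+1\big)$ with largest root $\tfrac12\big((k-2)+\sqrt{(k-2)^2-4}\big)\ge\tfrac{3+\sqrt5}{2}\approx2.618$. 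As each value exceeds $1.230391$, monotonicity propagates the bound to all dominating polygons, and the strictness of these excesses forces $(2,3,8)$ to be the unique realiser of $\tau_2$. The remaining assertion that $\tau_2$ is the seventh smallest element of the list $\mathrm{(L)}$ is then a direct numerical comparison of $1.230391$ with the six smaller tabulated Salem numbers.

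The main obstacle is step (ii) for $(2,4,5)$: this triangle is \emph{incomparable} to $(2,3,8)$ in the partial order, since its middle label $4$ exceeds $3$ while its largest label $5$ lies below $8$, so Lemma~\ref{Terragni} gives no information and the inequality $\tau_{(2,4,5)}>\tau_{(2,3,8)}$ must be obtained from the explicit root comparison above rather than from any domination argument. The area heuristic $\mathrm{area}=\pi\big(1-\sum 1/p_i\big)$ predicts the correct ordering, but it is not itself a proof, since the growth rate is not a monotone function of area in general.
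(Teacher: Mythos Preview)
Your argument is correct and takes a genuinely different route from the paper's. The paper does not invoke Lemma~\ref{Terragni} for this proposition; instead it works directly with the decomposition $\frac{1}{f(t)}=1-\frac{t}{[2]}H(t)$, where $H(t)=\sum_{v}\frac{[k_v-1]}{[k_v]}$, and establishes the pointwise inequality $H(t)>H_{[3,8]}(t)$ on the interval $(0,\,1/\tau_{[3,8]}]$ for every competing polygon. The case split is by vertex number $\mathfrak f_0\in\{3,4,\ge5\}$, using elementary monotonicity of the terms $h_k=[k-1]/[k]$; the one delicate step is an explicit verification that the difference $(h_2+h_4+h_5)(t)-H_{[3,8]}(t)$, which controls the triangle $(2,4,5)$, is positive on $(0,1]$ by factoring its numerator. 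Your approach instead front-loads a structural reduction: list the $\le$-minimal hyperbolic polygons $(2,4,5)$, $(3,3,4)$, $(2,2,2,3)$, $(2,\dots,2)$, push every competitor down to one of them via Lemma~\ref{Terragni}, and then compare the finitely many resulting growth rates by locating the largest real root of each $\Delta_{p_1,\dots,p_k}$. The paper trades several root computations for one rational-function inequality; you do the reverse, which is arguably more systematic and makes the role of the incomparable case transparent. As a small refinement, note that for $k\ge4$ every hyperbolic Coxeter $k$-gon contains a rank-$3$ parabolic subsystem with labels $\ge2,\ge3,\infty$ (take two adjacent sides at a non-right angle together with a side disjoint from one of them), so $(2,3,8)$ embeds and Lemma~\ref{Terragni} already gives $\tau_P>\tau_{[3,8]}$; hence the only competitors that genuinely require your explicit numerical comparison are the two incomparable triangles $(2,4,5)$ and $(3,3,4)$.
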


\begin{proof} The strategy of the proof is similar to the one given for $\tau_1$
in \cite[Section 4.1]{KK}.
By Steinberg's formula \ref{eq:Steinberg} (see also~(\ref{eq:Delta})), the growth function $f_{[3,8]}(t)$ of the Coxeter triangle
group $[3,8]$ equals
\[
f_{[3,8]}(t)=\frac{p(t)}{t^{10} - t^7 - t^5 - t^3 + 1}\,,
\]
with a certain numerator polynomial $p(t)\in\mathbb Z[t]$.

Let $P\subset\mathbb H^2$ be a compact Coxeter polygon with number of vertices
$\mathfrak f_0$ and
associated Coxeter group $G$. Denote by $\pi/k_v$ the interior
angle at the vertex $v$ in $P$. That is, the vertex stabiliser $G_v\subset G$ 
is the dihedral group $D_2^{k_v}$ of order $2k_v\ge 4$,
with growth polynomial $[2][k_v]$.  As a consequence,
\begin{equation}\label{eq:f-function}
\begin{aligned}
\frac{1}{f(t)}=&1-\frac{\vert S\vert}{[2]}+\sum\limits_{v\in P}\frac{1}{[2][k_v]}=1-\frac{1}{[2]}\,
\sum\limits_{v\in P}\,\big\{1-\frac{1}{[k_v]}\big\}\\
=&1-\frac{t}{[2]}\,\sum\limits_{v\in P}\,\frac{[k_v-1]}{[k_v]}=:1-\frac{t}{[2]}\,\sum\limits_{v\in P}\,h_{v}(t),\,\\
\end{aligned}
\end{equation}
where the help functions $h_v\,,\,v\in P,$ and their sum $H(t)$ can be written in the form 
\begin{equation}\label{eq:help}
h_{k}(t)=h_{k_v}(t)=\frac{[n(k)]}{[n(k)+1]}\quad,\quad H(t):=\sum\limits_{v\in P}h_k(t)\,,
\end{equation} 
since
the exponents of the group $D_2^{k_v}$ are equal to
$n_1=1$ and $n_2=n(k)=k_v-1$. 

By results of \cite[Section 3.1]{KK}, we have the following properties for the functions $h_k(t)$ and $[n]$ for all $x\in (0,1]$.

\begin{itemize}
 \item[(a)] For all $i<j$, $\quad 0<h_i(t)<h_j(t)<1\,.$

\item[(b)] For any positive integer $l$, 
$\,\displaystyle{\frac{2}{[2]}>\frac{[l]}{[l+1]}}\,.$
\end{itemize}

In order to show that the growth rate $\tau$ for any compact Coxeter polygon $P\subset \Bbb H^2$ which is
not isometric to a Coxeter triangle $[3,m]$ for $m=7,8$ satisfies $\tau>\tau_{[3,8]}$, 
it is sufficient to show 
that for each $t\in(0,1/\tau_{[3,8]}]$, the value $1/f_{[3,8]}(t)$ is strictly bigger than the corresponding 
value $1/f(t)$ for $P$. By the identities \eqref{eq:f-function} and \eqref{eq:help}, this means that we have to show that
\begin{equation}\label{eq:H-compare}
H(t)>H_{[3,8]}(t)=\frac{1}{[2]}+\frac{[2]}{[3]}+\frac{[7]}{[8]}\quad\hbox{for all}\quad t\in(0,1/\tau_{[3,8]}]\,.\end{equation}

To this end, 
we consider three cases in terms of the number of vertices $\mathfrak f_0\ge3$.

\smallskip\emph{Case 1.} Let $\mathfrak f_0\ge5$, and consider compact Coxeter polygons with at least five vertices. Here, all the interior angles 
may be equal to
$\pi/2$. Hence, by \eqref{eq:H-compare} and (b), we get the obvious
estimate
\[
H(t)\ge \frac{5}{[2]}>H_{[3,8]}(t)\,.
\]

\smallskip\emph{Case 2.} Let $\mathfrak f_0=4$, and consider
hyperbolic Coxeter quadrilaterals $P$ by noticing that they may have at most three right angles. 
Hence, by the properties (a) and (b) above, we get the estimate
\begin{equation}\label{eq:case2}
H(t)\ge \frac{3}{[2]}+\frac{[2]}{[3]}\,.
\end{equation}
Therefore, \eqref{eq:H-compare}, (b) and \eqref{eq:case2} imply that
a compact Coxeter quadrilateral has strictly bigger growth rate than the triangle
$[3,8]$. 

\smallskip\emph{Case 3.} 
Let $\mathfrak f_0=3$, and let $P$ be a Coxeter triangle with angular
existence condition
$1/p+1/q+1/r<1$ for integers $p,q,r\ge2$. Hence, at most one angle of $P$ can be equal to $\pi/2$, for example. 
The angular existence condition and the properties (a) and (b) imply that 
$H$ must satisfy at least one of the following inequalities.
\begin{subequations}
\begin{align} 
H(t)&\ge \frac{1}{[2]}+\frac{[2]}{[3]}+\frac{[l]}{[l+1]}=:H_l(t)\quad\hbox{for}\quad l\ge 7\,;\hskip2cm&\label{eq:a}\\
H(t)&\ge \frac{1}{[2]}+\frac{[3]}{[4]}+\frac{[4]}{[5]}=:H_4(t)>H_{[3,8]}(t)\,,&\label{eq:b}\\
H(t)&\ge 2\,\frac{[2]}{[3]}+\frac{[3]}{[4]}=:H_{34}(t)>H_{[3,8]}(t)\,,&\label{eq:c}
\end{align}
\end{subequations}
with equality in \eqref{eq:a}
only if $l=7$ and therefore $G\cong [3,8]$.
Indeed, the first inequality holds for all Coxeter triangles
having angles $\pi/2,\pi/3$, and by comparison with \eqref{eq:H-compare}, the function $H_l(t)\,,\,l\ge7\,,$ does coincide
with $H_{[3,8]}(t)$ precisely for $l=7$. As for \eqref{eq:b}, which concerns right-angled Coxeter triangles with no angle equal to $\pi/3$,
we consider the difference function $\Delta_b(t):=H_4(t)-H_{[3,8]}(t)$
for $t\in(0,1]$. A straightforward computation yields
$$\Delta_b(t)=\frac{t^2\,F(t)}{[2][3][5]\,(t^2+1)\,(t^4+1)}\quad,$$
where $F(t)=t^{8}+t^7-t^5-t^4-t^3+t+1=t^7+(1-t^3)\,(1+t-t^5)$ so that $\Delta(t)>0\,$ on $\,(0,1]$.
Finally, for \eqref{eq:c}, and the comparison with Coxeter triangles with no angle equal to $\pi/2$, we study the difference function $\Delta_c(t):=H_{34}(t)-H_{[3,8]}(t)$
for $t\in(0,1]$. One easily checks that
$$\Delta_c(t)=\frac{t\,(1-t^3+t^6)}{[2][3]\,(t^2+1)\,(t^4+1)}>0\quad\hbox{for}\quad t\in(0,1]\,\,.$$
Therefore, 
$[3,8]$ has smallest growth rate among all compact planar Coxeter triangles different from $[3,7]$.
Hence, we proved our assertion.
\end{proof}

\begin{remark}\label{strict}
Consider the hyperbolic Coxeter triangles with Coxeter symbols $[3,8]$ and $[3,\infty]$, respectively. In contrast to $[3,8]$ the Coxeter triangle $[3,\infty]$ is not compact but still of finite area. By Steinberg's formula \eqref{eq:Steinberg}, the difference of their inverted growth functions satisfies
\begin{equation}\label{eq:38}
\frac{1}{f_{[3,8]}(t)}-\frac{1}{f_{[3,\infty]}(t)}=\frac{t^9}{[3][8]}>0\quad\hbox{for}\quad t>0\,\,.
\end{equation}

This fact shows that $\tau_2=\tau_{[3,8]}<\tau_{[3,\infty]}$ which sharpens the first part of the estimate given in Example \ref{growth-monotonicity}.
\end{remark}

We are now ready to prove our first main result as given by Theorem~\ref{main01} in Section~\ref{Intro}. It provides the answer
to a question which 
the first author raised at the Oberwolfach Mini-Workshop on {\it Reflection Groups in Negative Curvature} in April 2019.

\begin{proof}[Proof of Theorem~\ref{main01}]
Let $P\subset\mathbb H^n$ be a compact Coxeter polyhedron of dimension $n\ge2$ and denote by
$G=(G,S)$ its associated Coxeter group.
Let $\tau=\tau_P=\tau_G$ be the growth rate of $P$ and of $G$.  
For $n=2$, the smallest growth rate $\tau_1=\tau_{[3,7]}$ equals the smallest known Salem number given by Lehmer's number $\alpha_L$, and the second smallest growth rate $\tau_2=\tau_{[3,8]}$ is the seventh smallest Salem number by Proposition \ref{238}.
Hence the five Salem numbers strictly in between $\tau_1$ and $\tau_2$
in the list $\mathrm{(L)}$ of \cite{Moss} do \emph{not} appear as growth rates of compact Coxeter polygons in $\mathbb H^2$. This holds, for example, for the fifth smallest Salem number $\approx1.216391$ with minimal polynomial $t^{10}-t^6-t^5-t^4+1$.

For $n=3$, we know by \cite[Section 3]{KK} that the minimal growth rate is the Salem number with minimal polynomial
$t^{10}-t^9-t^6+t^5-t^4-t+1$ and belongs to the Coxeter tetrahedron $[3,5,3]$. Since $\tau_{[3,5,3]}\approx 1.350980$,
it follows that none of the first 47 smallest Salem numbers as listed in (L) appear as growth rates of compact Coxeter polyhedra in $\mathbb H^3$.

Suppose that $n\ge4$. Our strategy is to show that the growth rate $\tau_P$ of a compact Coxeter polyhedron $P\subset\mathbb H^n$ is either not a Salem number or satisfies $\tau_P>\tau_{[3,8]}$.
We distinguish between the two cases that (i) $P$ has mutually intersecting facets or that
(ii) the polyhedron $P$ has a pair of disjoint facets. 

In case (i), we know by Example \ref{ex4} that $P$ is of dimension four and equals either one of the five Coxeter simplices or one of the seven Esselmann polyhedra.  By Remark \ref{2Salem}, the growth rates of the five Coxeter $4$-simplices 
are not Salem numbers anymore. For the seven Esselmann polyhedra $E_i$,
the growth functions $f_i(t)\,,\,1\le i\le7\,,$ can be 
determined by means of Steinberg's formula; see also \cite[pp. 89-90]{Perren}.
This can also be achieved by using the software CoxIter~\cite{CoxIter}. Then, by analysing the sign changes of the denominator polynomials
of $f(t)$, one checks that the biggest real pole lies -- roughly -- between $1.90$ and $2.61$. The smallest growth rate among the Esselmann polyhedra
is $\approx 1.902812$ and belongs to the group with Coxeter symbol $[8,3,4,3,8]$.
Hence, for $1\le i\le 7$, the growth rate $\tau_i$ of $E_i$ satisfies $\tau_i>\tau_{[3,8]}$.

In case (ii), $P$ has a pair of disjoint facets. It follows that the Coxeter group $G=(W,S)$ of rank $N>5$ associated with $P$ has a Coxeter diagram $\Gamma_G$ with a dotted edge; see Section \ref{section2-2}.
Denote by $v,w$ two nodes connected by a dotted edge in $\Gamma_G$. The product of the corresponding generators in $S$ encoded by $v,w$ is of infinite order $\infty$. 
Consider the \emph{abstract} Coxeter graph $\Gamma$ of $(W,S)$ that results from $\Gamma_G$ by replacing all present bold and dotted edges by edges with weight $\infty$. The graph $\Gamma$ contains the edge of weight $\infty$ connecting the nodes $v,w$.
Since $\Gamma$ is connected of order $N>5$, there is a node $u$ in $\Gamma$ which is connected to one or both of the nodes $v,w$
by an edge with weight $2\le k\le\infty$ and with weight $3\le l\le\infty$, say. 
The nodes $u,v$ and $w$ determine a subgraph $\Gamma_{uvw}$ of $\Gamma$.
Hence we get the following sequence of ordered Coxeter graphs by taking into account Figure \ref{fig:subgraph1}.

\begin{figure}[H]
\centering
\begin{picture}(190,40)
    \put(0,15){\circle*{4}}
    \put(15,15){\circle*{4}}
    \put(30,15){\circle*{4}}
    \put(0,15){\line(1,0){30}}
    
    \put(60,15){\circle*{4}}
    \put(75,15){\circle*{4}}
    \put(90,15){\circle*{4}}
    \put(60,15){\line(1,0){30}}
    
    \put(130,22){\circle*{4}}
    \put(150,22){\circle*{4}}
    \put(140,7){\circle*{4}}
    \put(130,22){\line(1,0){20}}
     \put(141,7){\line(3,5){10}}
     \put(139,7){\line(-3,5){10}}   
     
    \put(120,21){$\scriptstyle v$}
    \put(154,21){$\scriptstyle w$}
    \put(137,-2){$\scriptstyle u$}   
    \put(126,9){$\scriptstyle k$}
    \put(149,9){$\scriptstyle l$}   

 
    \put(40,12){$\le$}     
    \put(100,12){$\le$} 
    \put(170,12){$\le$}    
    \put(185,12){$\Gamma$}      
       
    \put(20,18){$\scriptstyle 8$}  
    \put(78,19){$\scriptstyle \infty$}  
    \put(136,26){$\scriptstyle \infty$}        
\end{picture}
\caption{A sequence of ordered Coxeter graphs.}
\label{fig:subgraph2}
\end{figure}
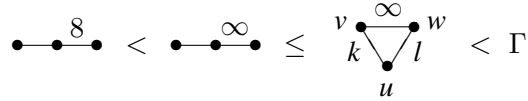
By means of the growth rate monotonicity given by Lemma \ref{Terragni}, combined with Remark \ref{strict}, \eqref{eq:38}, we deduce that $\tau_{[3,8]}<\tau_{[3,\infty]}\le\tau_{G}$ as asserted.
\end{proof}
\begin{remark}\label{higherdim-Salem}
The proof of Theorem~\ref{main01} shows that if the growth rate $\tau_P$ of a compact hyperbolic Coxeter polyhedron $P$ of dimension $n\ge4$
happens to be a Salem number, then $\tau_P>\tau_{[3,8]}$.  
However, there is no example known to date where this is the case.
\end{remark}
\section{The spectral radius of a Coxeter transformation}\label{spectral}
Consider an (abstract) Coxeter system~$(W,S)$ of rank $N$ with finite generating set~$S=\{s_1,\dots,s_N\}$ 
and associated Coxeter diagram $\Gamma$. Denote by $c\in W$ a Coxeter element and represent it geometrically 
by its Coxeter transformation $C=\rho(c)\in \mathrm{GL}_N(V)$.  Recall that 
all Coxeter elements are conjugate in $W$ if $\Gamma$ is a tree. In this case, the spectral radius $\lambda$ of $C$
is related to the leading eigenvalue $\alpha$ of the adjacency matrix of $\Gamma$ by $\,\alpha^2 = 2 + \lambda+\lambda^{-1}\,$; see Section~\ref{section2-1}.

Recall that if all the weights of the edges of a Coxeter diagram $\Gamma$
are equal to~$3$, then the Coxeter adjacency matrix equals the 
adjacency matrix of the underlying abstract graph. In this case, we will sometimes simply write ``graph" instead of ``Coxeter diagram". We will also write ``Coxeter tree" to stress the case where the underlying abstract graph of a Coxeter diagram is a tree.

We now define specific Coxeter diagrams that we use in this section.

Let~$k\ge3$ and $p_1,\dots,p_k\ge2$ be integers. 
Recall that the \emph{star graph}~$\mathrm{Star}(p_1,\dots,p_k)$
is defined to be the tree with one vertex of valency~$k$ that has~$k$ outgoing paths of respective lengths~$p_i -1$.
For example,~$\mathrm{Star}(2,3,7)$ the graph of the Coxeter group~$E_{10}$.

Suppose now that ~$\frac{1}{p_1}+\dots+\frac{1}{p_k}<k-2$, and consider the Coxeter diagram
$\Gamma(p_1,\ldots,p_k)$ of the compact hyperbolic Coxeter polygon $P=(p_1,\ldots,p_k)\subset\mathbb H^2$; see Example~\ref{ex1}. Recall that its growth rate $\tau$ is a Salem number. 

The goal of the section is to prove our second main result as stated by Theorem~\ref{main02} in Section~\ref{Intro}. 

For a Coxeter tree~$\Gamma$, we denote by~$C_\Gamma$ the associated Coxeter transformation and by~$\Phi_\Gamma (t)$ its characteristic polynomial. 
Given a graph~$\Gamma$ and a vertex~$v$ of~$\Gamma$, we denote by~$\Gamma-v$ the subgraph obtained by deleting~$v$ and all its adjacent edges from~$\Gamma$.
A~\emph{leaf} of a tree is a vertex of valency one.

\begin{lemma}
\label{Coxeterrecursion}
Let~$\Gamma$ be a Coxeter tree and let~$v$ be a leaf. Let~$v'$ be the unique vertex adjacent to~$v$, and let~$m$ be the weight of the edge connecting~$v$ and~$v'$. Then we have the following identity in~$\mathbb{R}[t]$:
\[
\Phi_\Gamma (t) = (1+t)\cdot\Phi_{\Gamma -v} - 4\cos^2\big(\frac{\pi}{m}\big)t\cdot\Phi_{\Gamma-v-v'}(t).
\]
\end{lemma}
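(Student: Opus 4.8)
The plan is to reduce the determinant $\Phi_\Gamma(t)=\det(tI-C_\Gamma)$ to the two smaller determinants by peeling off the leaf reflection. Since $\Gamma$ is a tree, the theorem of Steinberg quoted in Section~\ref{section2-1} guarantees that all Coxeter elements are conjugate, so $\Phi_\Gamma$ does not depend on the order in which the generators are multiplied. I would therefore choose the Coxeter element $c=s_v s_{v'} s_{i_3}\cdots s_{i_N}$, with the leaf $v$ first and its unique neighbour $v'$ second, and write $C_\Gamma=\rho(s_v)C'$, where $C'=\rho(s_{v'})\rho(s_{i_3})\cdots\rho(s_{i_N})$ is the geometric realisation on all of $V$ of a Coxeter element of $\Gamma-v$.

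Next I would exploit that each reflection $\rho(s_i)\colon x\mapsto x-2B(e_i,x)e_i$ changes only the $i$-th coordinate. In the ordered basis $(e_v;(e_j)_{j\ne v})$ this shows at once that every factor of $C'$ fixes the $e_v$-coordinate, so
\[
C'=\begin{pmatrix}1&0\\ w&C''\end{pmatrix},\qquad \rho(s_v)=\begin{pmatrix}-1&u^{\mathsf T}\\ 0&I\end{pmatrix},
\]
where $C''$ is the Coxeter transformation of $\Gamma-v$ (so that $\det(tI-C'')=\Phi_{\Gamma-v}(t)$), and where, because $v$ is a leaf adjacent only to $v'$, both $w$ and $u$ equal $2\cos(\pi/m)\,e_{v'}$ (recall $B(e_v,e_{v'})=-\cos(\pi/m)$). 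Multiplying gives $C_\Gamma=\bigl(\begin{smallmatrix}-1+u^{\mathsf T}w& u^{\mathsf T}C''\\ w&C''\end{smallmatrix}\bigr)$, and a Schur-complement expansion of $\det(tI-C_\Gamma)$ along the block $tI-C''$ yields
\[
\Phi_\Gamma(t)=(1+t)\,\Phi_{\Gamma-v}(t)-t\,u^{\mathsf T}\,\mathrm{adj}(tI-C'')\,w .
\]
Since $u=w=2\cos(\pi/m)\,e_{v'}$, the correction term equals $4\cos^2(\pi/m)\,t$ times the $(v',v')$-cofactor of $tI-C''$, that is, the principal minor obtained by deleting the row and column indexed by $v'$.

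The remaining—and main—task is to identify this minor with $\Phi_{\Gamma-v-v'}(t)$. Here I would use the freedom in the ordering once more: writing $C'=\rho(s_{v'})\tilde C$ with $\tilde C=\prod_{i\ne v,v'}\rho(s_i)$ and passing to the basis $(e_{v'};(e_j)_{j\ne v,v'})$, the reflections making up $\tilde C$ preserve the coordinate subspace $W'$ spanned by the $e_j$ with $j\ne v,v'$ and act on it exactly as the reflections of $\Gamma-v-v'$. A short block computation then shows that the lower-right $(N-2)\times(N-2)$ block of $C''$ is precisely the Coxeter transformation $\hat C$ of $\Gamma-v-v'$, so the principal minor in question is $\det(tI-\hat C)=\Phi_{\Gamma-v-v'}(t)$. (Note that $\Gamma-v-v'$ may be a forest; its characteristic polynomial is still well defined as the product over the connected components.) Substituting back gives the asserted identity.

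I expect the delicate point to be exactly this last identification: the cofactor of $tI-C''$ is not an invariant of the conjugacy class of $C''$, so it must be computed in a concrete realisation, and the correct one is that in which $v'$ is reflected first. Verifying that deleting the row and column of $v'$ returns the sub-transformation $\hat C$—rather than some conjugate or perturbation of it—is the crux, and it hinges on the leaf property of $v$ (ensuring that $w$ and $u$ are supported on the single coordinate $e_{v'}$) together with the invariance of $W'$ under the reflections of $\Gamma-v-v'$. A quick sanity check on the single edge $\Gamma=\{v,v'\}$, where the formula reduces to $(1+t)^2-4\cos^2(\pi/m)\,t$, confirms the signs.
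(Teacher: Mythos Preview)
Your argument is correct and takes a genuinely different route from the paper. The paper chooses the \emph{bipartite} Coxeter element: it splits the vertices of~$\Gamma$ into two independent sets~$V_1,V_2$, writes the Coxeter adjacency matrix as $\bigl(\begin{smallmatrix}0 & X\\ X^\top & 0\end{smallmatrix}\bigr)$, and factors $C_\Gamma = -\bigl(\begin{smallmatrix}I & X\\ 0 & I\end{smallmatrix}\bigr)\bigl(\begin{smallmatrix}I & 0\\ -X^\top & I\end{smallmatrix}\bigr)$. This yields $\Phi_\Gamma(t) = \det\bigl(t\bigl(\begin{smallmatrix}I & 0\\ X^\top & I\end{smallmatrix}\bigr)+\bigl(\begin{smallmatrix}I & X\\ 0 & I\end{smallmatrix}\bigr)\bigr)$, a matrix with every diagonal entry equal to~$1+t$ and off-diagonal pattern mirroring the adjacency of~$\Gamma$; one cofactor expansion along the row and column of~$v$ gives the recursion, and the remaining minor is visibly $\Phi_{\Gamma-v-v'}(t)$ because deleting those two rows and columns reproduces exactly the same bipartite determinant for~$\Gamma-v-v'$. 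Your approach instead orders $v$ first and $v'$ second, peels off $\rho(s_v)$ via a Schur complement, and then argues separately that the $(v',v')$-minor of $tI-C''$ equals $\Phi_{\Gamma-v-v'}(t)$---which, as you rightly flag, is the delicate point since minors are not conjugation-invariant and requires the further factorisation $C''=\rho(s_{v'})|_{V'}\cdot\tilde C|_{V'}$. The bipartite trick buys a symmetric expression in which both deletions happen in one stroke and no secondary identification is needed; your ordering is more hands-on and avoids the bipartite factorisation, at the price of that extra block computation.
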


\begin{proof}
Since the conjugacy class of the Coxeter transformation does not depend on the Coxeter element, we are free to choose the \emph{bipartite} Coxeter transformation for our calculations. More precisely, we partition the vertices of~$\Gamma$ into two sets~$V_1$ and~$V_2$ so that all edges of~$\Gamma$ connect a vertex from~$V_1$ 
with a vertex in~$V_2$. We then choose an ordering of the vertices of~$\Gamma$ so that all vertices in~$V_1$ appear before the vertices of~$V_2$, 
and take the corresponding Coxeter element. See Figure~\ref{H283} for an example of a bipartite ordering of the vertices of a tree.
Now, let~$\left(\begin{smallmatrix} 0 & X \\ X^\top & 0\end{smallmatrix}\right)$ be the adjacency matrix with respect to our chosen bipartite ordering of the vertices. 
For the Coxeter transformation, we then get that 
\[
C_\Gamma = \left(\begin{smallmatrix} -I & X \\ 0 & I\end{smallmatrix}\right)\left(\begin{smallmatrix} I & 0 \\ X^\top & -I\end{smallmatrix}\right)= 
- \left(\begin{smallmatrix} I - XX^\top & X \\ -X^\top & I\end{smallmatrix}\right) 
= - \left(\begin{smallmatrix} I & X \\ 0 & I\end{smallmatrix}\right)\left(\begin{smallmatrix} I & 0 \\ -X^\top & I\end{smallmatrix}\right).
\]
For the characteristic polynomial, we obtain
\begin{align}
\label{eq1}
\Phi_\Gamma(t) &= \det(t\cdot I - C_\Gamma) \\
&= \det(t\cdot I + \left(\begin{smallmatrix} I & X \\ 0 & I\end{smallmatrix}\right)\left(\begin{smallmatrix} I & 0 \\ -X^\top & I\end{smallmatrix}\right))\\
&= \det(t\cdot \left(\begin{smallmatrix} I & 0 \\ X^\top & I\end{smallmatrix}\right) + \left(\begin{smallmatrix} I & X \\ 0 & I\end{smallmatrix}\right)).\label{eq3}
\end{align}
The matrix we are taking the determinant of can be chosen to have the form
\[
  \renewcommand{\arraystretch}{1.2}
  \left(
  \begin{array}{ c c c | c c | c c c }
    \multicolumn{1}{c}{} & & & 0 & {0}&  & &  \\
    \multicolumn{1}{c}{} & (1+t)I & & {\vdots} & {\vdots} &  & \ast&  \\
     \multicolumn{1}{c}{} &  & & 0 & {0} &  & &  \\
    \cline{1-8}
    0 & \cdots & 0 & 1+t & 2\cos(\frac{\pi}{m}) & \ast &\cdots & \ast \\
    0 & \cdots & 0 & 2\cos(\frac{\pi}{m})t & 1+t & 0 &\cdots & 0 \\
   \cline{1-8}
     &  &  & \ast & 0 & & & \\
     & \ast &  & \vdots & \vdots & & (1+t)I & \\
     &  &  & \ast & 0 & & & 
  \end{array}
  \right),
\]
where the two middle columns and rows correspond to the vertices~$v$ and~$v'$. 
The assertion follows by developing the column and the row that correspond to the vertex~$v$.
\end{proof}

We now specialise Lemma~\ref{Coxeterrecursion} to the trees given by star graphs~$\mathrm{Star}(p_1,\dots,p_k)$.
In order to do so, we first simplify our notation as follows. Let~$C_{p_1,\dots,p_k}$ be the Coxeter transformation of~$\mathrm{Star}(p_1,\dots,p_k)$, 
and denote by~$\Phi_{p_1,\dots,p_k}(t)$ the characteristic polynomial of~$C_{p_1,\dots,p_k}$. 
The following statements follow directly from Lemma~\ref{Coxeterrecursion}. 

\begin{lemma}
\label{Coxeterskein}
Let~$p_1,\dots,p_k\ge2$ be integers.
\begin{enumerate}
\item If~$p_k\ge 4$, we have the following equality in~$\mathbb{Z}[t]$:
\[
\Phi_{p_1,\dots,p_k}(t) = (1+t)\cdot\Phi_{p_1,\dots,p_k-1}(t) -t\cdot\Phi_{p_1,\dots,p_k-2}(t).
\]
\item If~$p_k = 3$, we have the following equality in~$\mathbb{Z}[t]$:
\[
\Phi_{p_1,\dots,p_{k-1},3}(t) = (1+t)\cdot\Phi_{p_1,\dots,p_{k-1},2}(t) -t\cdot\Phi_{p_1,\dots,p_{k-1}}(t).
\]
\end{enumerate}
\end{lemma}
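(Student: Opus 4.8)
The plan is to apply Lemma~\ref{Coxeterrecursion} directly, taking $\Gamma = \mathrm{Star}(p_1,\dots,p_k)$ and choosing $v$ to be the leaf at the far end of the $k$-th arm, with $v'$ its unique neighbour. Since every edge of a star graph carries weight~$3$, the factor $4\cos^2(\pi/m)$ appearing in Lemma~\ref{Coxeterrecursion} specialises to $4\cos^2(\pi/3) = 1$, so that the recursion reads $\Phi_\Gamma(t) = (1+t)\,\Phi_{\Gamma-v}(t) - t\,\Phi_{\Gamma-v-v'}(t)$. It then remains only to identify the two smaller trees $\Gamma-v$ and $\Gamma-v-v'$ with the appropriate star graphs, and here the two cases diverge according to the length of the $k$-th arm.

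First I would treat case~(1), where $p_k \ge 4$ and hence the $k$-th arm has length $p_k - 1 \ge 3$. The vertex $v'$ then sits at distance $p_k - 2 \ge 2$ from the central vertex, so it is an interior vertex of the arm rather than the centre itself. Deleting $v$ shortens the arm by one, yielding $\Gamma - v = \mathrm{Star}(p_1,\dots,p_k-1)$; deleting both $v$ and $v'$ shortens it by two, yielding $\Gamma - v - v' = \mathrm{Star}(p_1,\dots,p_k-2)$, which is still a genuine star graph precisely because $p_k - 2 \ge 2$. Substituting these identifications into the specialised recursion gives the identity of part~(1).

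The case $p_k = 3$ requires separate care, and this is the only real subtlety I anticipate. Now the $k$-th arm has length $p_k - 1 = 2$, consisting of just the leaf $v$ and its neighbour $v'$, and crucially $v'$ is now \emph{adjacent to the central vertex}. As before, deleting $v$ gives $\Gamma - v = \mathrm{Star}(p_1,\dots,p_{k-1},2)$; but deleting $v'$ as well removes the entire $k$-th arm, so that $\Gamma - v - v'$ no longer possesses a vertex of valency $k$ and is instead the star graph $\mathrm{Star}(p_1,\dots,p_{k-1})$ on one fewer arm. Feeding these into the recursion produces the identity of part~(2). The whole argument is thus a direct specialisation of Lemma~\ref{Coxeterrecursion}; the single point to watch is that the combinatorial type of $\Gamma - v - v'$ changes—dropping an arm—exactly as the terminal arm shrinks from length $\ge 3$ to length $2$, which is precisely what forces the two separate cases.
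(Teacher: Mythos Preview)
Your proposal is correct and takes exactly the approach indicated by the paper, which simply records that the lemma follows directly from Lemma~\ref{Coxeterrecursion}. You have supplied the details the paper omits, correctly identifying the two subtrees $\Gamma-v$ and $\Gamma-v-v'$ in each case and noting that the distinction between (1) and (2) arises precisely from whether $v'$ is the central vertex's neighbour.
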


\begin{example}
\label{starexample}
The Coxeter transformation of the star graph with~$k\ge 1$ arms of length one has characteristic polynomial~$(t+1)^{k-1}(t^2-(k-2)t+1)$.
This can be verified, for example, by an inductive argument and Lemma~\ref{Coxeterrecursion}.
\end{example}

\begin{definition}
\label{denominatorpoly}
For integers~$p_1,\dots,p_k\ge2$, define
 $\Delta_{p_1,\dots,p_k}(t)\in\mathbb Z(t)$ by
\[
\Delta_{p_1,\dots,p_k}(t) := {[2]\prod_{i=1}^k [p_i]}\left( 1-\frac{k}{[2]} + \sum_{i=1}^k \frac{1}{[2][p_i]}\right)=[2]\prod_{i=1}^k [p_i] - k\prod_{i=1}^k [p_i] + \sum_{i=1}^k \prod_{j\ne i} [p_j]\,.
\]
\end{definition}

Recall that 
for~$k\ge3$ and $\frac{1}{p_1}+\dots+\frac{1}{p_k}<k-2$, the growth function $f_P(t)$ of a compact hyperbolic polygon $P=(p_1,\dots,p_k)$
is reciprocal and given by (see \eqref{eq:reciprocal}--\eqref{eq:Delta})
\[
f_P(t)=\frac{[2]\prod_{i=1}^k [p_i]}{\Delta_{p_1,\dots,p_k}(t)}\,.
\]

\begin{example}
\label{rightangledexample}
For $p_1=\dots=p_k=2$, 
we obtain
\[
\Delta_{2,\dots,2}(t) = [2]^{k+1} - k[2]^k + k[2]^{k-1} = (t+1)^{k-1}(t^2-(k-2)t+1).
\]
This polynomial equals the characteristic polynomial of the Coxeter transformation of the star graph with~$k\ge3$ arms of length one,
see Example~\ref{starexample}.
\end{example}

\begin{proposition}
\label{eq_prop} Let~$p_1,\dots,p_k\ge2$ be integers. 
We have the following equality in~$\mathbb{Z}[t]:$
\[ 
\Delta_{p_1,\dots,p_k}(t) = \Phi_{p_1,\dots,p_k}(t).
\]
\end{proposition}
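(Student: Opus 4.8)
The plan is to prove the polynomial identity $\Delta_{p_1,\dots,p_k}(t) = \Phi_{p_1,\dots,p_k}(t)$ by induction, exploiting the fact that both sides satisfy the \emph{same} recursion in the last parameter $p_k$. On the $\Phi$-side, this recursion is exactly the content of Lemma~\ref{Coxeterskein}, which was derived from the leaf-deletion formula of Lemma~\ref{Coxeterrecursion}. So the heart of the argument is to establish the corresponding recursion on the $\Delta$-side directly from Definition~\ref{denominatorpoly}, and then check that the base cases match.

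First I would verify that $\Delta_{p_1,\dots,p_k}$ satisfies, for $p_k \ge 4$,
\[
\Delta_{p_1,\dots,p_k}(t) = (1+t)\,\Delta_{p_1,\dots,p_k-1}(t) - t\,\Delta_{p_1,\dots,p_k-2}(t),
\]
and the analogous relation of Lemma~\ref{Coxeterskein}(2) when $p_k = 3$ (with the $\Phi_{p_1,\dots,p_{k-1}}$ term replaced by $\Delta_{p_1,\dots,p_{k-1}}$). This is a purely formal manipulation: since $[2]=1+t$ and the bracket polynomials satisfy $[p] = [p-1] + t^{p-1}$, equivalently $[p+1]-[p] = t^{p}$ and more usefully $(1+t)[p-1] - t[p-2] = [p]$ for $p \ge 3$ (this is the key bracket identity one checks by hand). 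Substituting the closed form
\[
\Delta_{p_1,\dots,p_k} = [2]\prod_{i=1}^k[p_i] - k\prod_{i=1}^k[p_i] + \sum_{i=1}^k\prod_{j\ne i}[p_j]
\]
and isolating the dependence on $[p_k]$, one collects terms linear in $[p_k]$, $[p_k-1]$, $[p_k-2]$ and applies the bracket identity to fold them into the claimed three-term recursion. I expect this step to be the main obstacle, not because it is deep, but because the bookkeeping of the three summands — the leading product, the $-k$ correction, and the sum of $(k-1)$-fold products — must be tracked carefully through the substitution $[p_k] \mapsto (1+t)[p_k-1] - t[p_k-2]$; the $\sum_{i\ne k}$ part of the correction term and the $k \mapsto k-1$ shift in the prefactor are where a naive calculation most easily goes wrong.

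Once both families obey identical recursions in $p_k$, the proof reduces to matching base cases. I would induct on $k$ and, for fixed $k$, on $p_k$. The innermost base case $p_1=\dots=p_k=2$ is already handled by Example~\ref{rightangledexample} together with Example~\ref{starexample}: both $\Delta_{2,\dots,2}$ and $\Phi_{2,\dots,2}$ equal $(t+1)^{k-1}(t^2-(k-2)t+1)$. For the step down from $k$ arms to $k-1$ arms, the recursion of Lemma~\ref{Coxeterskein}(2) at $p_k=3$ expresses $\Phi_{p_1,\dots,p_{k-1},3}$ in terms of $\Phi_{p_1,\dots,p_{k-1},2}$ and $\Phi_{p_1,\dots,p_{k-1}}$, and the matching $\Delta$-recursion does the same; since collapsing an arm of length one ($p_k=2$) reduces the star to one with $k-1$ arms, the two inductive hypotheses interlock to propagate the equality $\Delta = \Phi$ to all $(p_1,\dots,p_k)$. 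The conclusion then follows, and combined with the displayed formula $f_P(t) = [2]\prod_i[p_i]/\Delta_{p_1,\dots,p_k}(t)$, it identifies the growth rate of $P$ with the largest root of $\Phi_{p_1,\dots,p_k}$, which is the spectral radius of the Coxeter transformation — this is precisely what Theorem~\ref{main2} asserts.
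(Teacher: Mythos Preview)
Your approach is essentially the paper's: establish that $\Delta_{p_1,\dots,p_k}$ satisfies the same three-term recursions as $\Phi_{p_1,\dots,p_k}$ (this is exactly Lemma~\ref{growthskein} in the paper, proved just as you outline via the bracket identity $(1+t)[p-1]-t[p-2]=[p]$), and then match base cases using Examples~\ref{starexample} and~\ref{rightangledexample}.

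There is one point where your induction scheme, as written, does not quite close. You propose to induct ``on $k$ and, for fixed $k$, on $p_k$'', with innermost base case $p_1=\dots=p_k=2$. But the $p_k=2$ base of the inner induction must hold for \emph{arbitrary} $p_1,\dots,p_{k-1}$, not just for all $p_i=2$; and your remark that ``collapsing an arm of length one ($p_k=2$) reduces the star to one with $k-1$ arms'' is incorrect --- $\mathrm{Star}(p_1,\dots,p_{k-1},2)$ still has $k$ arms and one more vertex than $\mathrm{Star}(p_1,\dots,p_{k-1})$. So recursion~(2) does not by itself feed the $p_k=2$ case into the outer induction on~$k$. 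The paper patches this cleanly by invoking the symmetry of both $\Delta$ and $\Phi$ under permutations of the $p_i$: one repeatedly applies recursion~(1) (after permuting the largest index into last position) until every $p_i\in\{2,3\}$, then applies recursion~(2) to strip off each $p_i=3$, landing either at the all-$2$ case or at the single remaining check $\Delta_3=[4]=\Phi_3$. Equivalently, you could induct on $\sum_i p_i$. Once you make the symmetry step explicit, your argument is complete and coincides with the paper's.
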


In order to prove Proposition~\ref{eq_prop}, we establish recursion formulas for the polynomial~$\Delta_{p_1,\dots,p_k}(t)$. 
These recursion formulas have the same form as the recursion formulas we gave for the polynomial~$\Phi_{p_1,\dots,p_k}(t)$ in Lemma~\ref{Coxeterskein}. 
This is the content of the following lemma.

\begin{lemma}
\label{growthskein}
Let~$p_1,\dots,p_k\ge2$ be integers.
\begin{enumerate}
\item If~$p_k\ge 4$, we have the following equality in~$\mathbb{Z}[t]$: \[\Delta_{p_1,\dots,p_k}(t) = (1+t)\cdot\Delta_{p_1,\dots,p_k-1}(t) -t\cdot\Delta_{p_1,\dots,p_k-2}(t).\]
\item If~$p_k=3$, we have the following equality in~$\mathbb{Z}[t]$: \[\Delta_{p_1,\dots,p_{k-1},3}(t) = (1+t)\cdot\Delta_{p_1,\dots,p_{k-1},2}(t) -t\cdot\Delta_{p_1,\dots,p_{k-1}}(t).\]
\end{enumerate}
\end{lemma}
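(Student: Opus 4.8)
The plan is to exploit the elementary three-term recursion satisfied by the bracket polynomials $[n]=1+t+\dots+t^{n-1}$, namely $[n]=(1+t)[n-1]-t[n-2]$, which holds for every $n\ge2$ together with the conventions $[1]=1$ and $[0]=0$. This is exactly the recursion that governs the characteristic polynomials $\Phi_{p_1,\dots,p_k}$ in Lemma~\ref{Coxeterskein}, so it is natural to expect it to propagate to $\Delta_{p_1,\dots,p_k}$ as well. The whole point is that $\Delta$ and $\Phi$ obey the same recursion, which will later yield Proposition~\ref{eq_prop} by induction.

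First I would record the structural observation that, for fixed $p_1,\dots,p_{k-1}$, the polynomial $\Delta_{p_1,\dots,p_k}$ is an affine function of $[p_k]$. Writing $M=\prod_{i=1}^{k-1}[p_i]$ and $N=\sum_{i=1}^{k-1}\prod_{j\ne i,\,j\le k-1}[p_j]$, one separates in the defining formula of Definition~\ref{denominatorpoly} the $i=k$ term of the sum and factors $[p_k]$ out of the remaining terms. This gives $\Delta_{p_1,\dots,p_k}=[p_k]\,A+B$ with $A=([2]-k)M+N$ and $B=M$, both independent of $p_k$.

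For part~(1), where $p_k\ge4$, the three polynomials $\Delta_{p_1,\dots,p_k}$, $\Delta_{p_1,\dots,p_k-1}$ and $\Delta_{p_1,\dots,p_k-2}$ all have the same $k$ arguments, hence share the coefficients $A,B$, evaluated at $[p_k]$, $[p_k-1]$ and $[p_k-2]$ respectively. The claimed recursion then reduces, via this affine form, to the bracket recursion $(1+t)[p_k-1]-t[p_k-2]=[p_k]$ for the $A$-coefficient and to the trivial identity $(1+t)-t=1$ for the $B$-coefficient. The hypothesis $p_k\ge4$ guarantees $p_k-1,p_k-2\ge2$, so that the two dominated polynomials are genuine $\Delta$'s.

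Part~(2) is the delicate point, since its right-hand side features $\Delta_{p_1,\dots,p_{k-1}}$ with only $k-1$ arguments, so a naive substitution $[p_k]=[1]$ would compare $\Delta$'s of different arity. The key is to check directly from the defining formula that substituting $p_k=1$ (equivalently $[p_k]=[1]=1$) into the $k$-argument $\Delta$ returns exactly the $(k-1)$-argument $\Delta_{p_1,\dots,p_{k-1}}$; concretely, $\Delta_{p_1,\dots,p_{k-1}}=([2]-k+1)M+N=A+B$, which is precisely the value of the affine form at $[p_k]=1$. Once this identification is in place, part~(2) follows by evaluating the affine form at $[3]$, $[2]$, $[1]$ and invoking the bracket recursion in the form $[3]=(1+t)[2]-t[1]=(1+t)[2]-t$. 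I expect the main obstacle to be exactly this bookkeeping across arities: one must verify cleanly that an arm of length $p_k-1=0$ amounts to deleting the arm, matching the geometric picture behind Lemma~\ref{Coxeterskein}(2), and track the change of $k$ into $k-1$ in the constant term of $A$.
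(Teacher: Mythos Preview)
Your proposal is correct and takes essentially the same approach as the paper: both write $\Delta_{p_1,\dots,p_k}=[p_k]A+B$ with $A,B$ independent of $p_k$, apply the bracket recursion $[n]=(1+t)[n-1]-t[n-2]$, and handle part~(2) by observing that $A+B=\Delta_{p_1,\dots,p_{k-1}}$. The only cosmetic difference is that the paper expresses $A$ directly as $\Delta_{p_1,\dots,p_{k-1}}-\prod_{i=1}^{k-1}[p_i]$, which makes the identity $A+B=\Delta_{p_1,\dots,p_{k-1}}$ immediate rather than requiring your short verification.
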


\begin{proof}
We have
\begin{align*}
\Delta_{p_1,\dots,p_k}(t) &= [2]\prod_{i=1}^k [p_i] - k\prod_{i=1}^k [p_i] + \sum_{i=1}^k \prod_{j\ne i} [p_j] \\
&= [p_k]\left([2]\prod_{i=1}^{k-1}[p_i] - k\prod_{i=1}^{k-1}[p_i] + \sum_{i=1}^{k-1}\prod_{j\ne i,k}[p_j] \right) + \prod_{i=1}^{k-1}[p_i]\\
&= [p_k]\left(\Delta_{p_1,\dots,p_{k-1}}(t) -\prod_{i=1}^{k-1}[p_i] \right)+  \prod_{i=1}^{k-1}[p_i]\\
&= [p_k]A + B,
\end{align*}
where the polynomials~$A = \left(\Delta_{p_1,\dots,p_{k-1}}(t) -\prod_{i=1}^{k-1}[p_i] \right)$ and~$B =  \prod_{i=1}^{k-1}[p_i]$ 
do not depend on~$p_k$. 
We now calculate
\begin{align*}
 \Delta_{p_1,\dots,p_k}(t) &= [p_k]A+B \\
&= \left((1+t)[p_k-1] -t[p_k-2]\right)A + B\\
&= (1+t)\left([p_k-1]A+ B \right)-t\left([p_k-2]A + B\right).
\end{align*}

If~$p_k\ge4$, the last line equals~$(1+t)\cdot\Delta_{p_1,\dots,p_k -1}(t) -t\cdot\Delta_{p_1,\dots,p_k -2}(t)$, which proves~(1). 
On the other hand, if~$p_k=3$, we have~$\left([p_k-2]A + B\right) = \Delta_{p_1,\dots,p_{k-1}}(t)$,
since~$[1] = 1$. This proves~(2).
\end{proof}

\begin{proof}[Proof of Proposition~\ref{eq_prop}]
First of all, we note that we can permute the~$p_i$ without changing~$\Delta_{p_1,\dots,p_k}(t)$ or~$\Phi_{p_1,\dots,p_k}(t)$.
Repeatedly using the recursion formulas~(1) of Lemma~\ref{Coxeterskein} and Lemma~\ref{growthskein},
the statement can hence be reduced to the class of cases where all~$p_i$ are in the set~$\{2,3\}$. Within this class, we now proceed by induction 
on the number~$N$ of~$p_i = 3$. 

We have two base cases,~$N=0$ and~$N=1$. If~$N = 0$, we are in the case~$k$ arbitrary and~$p_i = 2$ for all~$i$. 
In this case, we are done by the Examples~\ref{starexample} and~\ref{rightangledexample}.
If~$N = 1$, there are two possibilities to consider. If~$k=1$, then~$\Delta_3(t) = [4] = \Phi_3(t)$
is a straightforward verification. If~$k\ge2$, we use the recursion formulas~(2) of Lemma~\ref{Coxeterskein} and Lemma~\ref{growthskein}
to reduce the statement to the case~$N=0$.

For the inductive step, we assume that~$N\ge 2$. As before, we use the recursion formulas~(2) of Lemma~\ref{Coxeterskein} and Lemma~\ref{growthskein}
to reduce the statement to the case~$N-1$. This finishes the proof.
\end{proof}

We are now ready to prove our second main result.

\begin{proof}[Proof of Theorem~\ref{main02}]
By definition, the spectral radius of the Coxeter transformation of the star graph~$\mathrm{Star}(p_1,\dots,p_k)$ 
equals the absolute value of the largest root of~$\Phi_{p_1,\dots,p_k}(t)$. 
By Proposition~\ref{eq_prop}, this in turn equals the absolute value of the largest root of~$\Delta_{p_1,\dots,p_k}(t)$. 

For $\frac{1}{p_1}+\dots+\frac{1}{p_k}<k-2$, consider the growth series $f(t)$ of the compact hyperbolic Coxeter polygon $P=(p_1,\dots,p_k)$. Since the denominator polynomial of the rational function $f(t)$ equals~$\Delta_{p_1,\dots,p_k}(t)$,
the growth rate $\tau$ of $P$, as given by the inverse of the radius of convergence of $f(t)$, also equals the absolute value of the largest root of~$\Delta_{p_1,\dots,p_k}(t)$.
\end{proof}

\newpage

\section{The tetrahedral groups $[3,5,3]$ and $[4,3,5]$}\label{growthspectral}

In the previous section, we have shown that the growth rates of planar hyperbolic Coxeter groups are 
spectral radii of Coxeter transformations. In this section, we start the investigation of this property in dimension three.
By giving both an example of a growth rate that is the spectral radius of a Coxeter transformation and an example that 
is not, we illustrate that the question becomes more difficult. 

\begin{definition}
For integers~$i,k\ge2$ and~$j\ge1$, let~$H(i,j,k)$ be the connected tree with two vertices~$v_1,v_2$ of valency three that are connected by a path of length~$j$. 
Furthermore,~$v_1$ has two additional outgoing paths: one of length~$i-1$ and one of length~$1$. Similarly,~$v_2$ has two 
additional outgoing paths: one of length~$k-1$ and one of length~$1$. 
\end{definition}
For example, Figure~\ref{H283} depicts the graph~$H(2,8,3)$. 

Consider the compact Coxeter tetrahedron~$[4,3,5]$ (see also Remark \ref{435}). The associated growth rate $\tau_{[4,3,5]}$ is the Salem number with 
minimal polynomial
\[
p(t) = t^8 - t^7 + t^6 - 2t^5 + t^4 - 2t^3 + t^2 -t + 1.
\]
This follows from work of Parry \cite{Parry} and can be conveniently verified by means of
the software CoxIter~\cite{CoxIter}. 
In particular, the growth rate $\tau_{[4,3,5]}$ 
is the largest root of~$p(t)$ and is $\approx1.359999$. 
\begin{figure}[h]
\begin{center}
\def\svgwidth{330pt}
\begingroup%
  \makeatletter%
  \providecommand\color[2][]{%
    \errmessage{(Inkscape) Color is used for the text in Inkscape, but the package 'color.sty' is not loaded}%
    \renewcommand\color[2][]{}%
  }%
  \providecommand\transparent[1]{%
    \errmessage{(Inkscape) Transparency is used (non-zero) for the text in Inkscape, but the package 'transparent.sty' is not loaded}%
    \renewcommand\transparent[1]{}%
  }%
  \providecommand\rotatebox[2]{#2}%
  \ifx\svgwidth\undefined%
    \setlength{\unitlength}{515.17781896bp}%
    \ifx\svgscale\undefined%
      \relax%
    \else%
      \setlength{\unitlength}{\unitlength * \real{\svgscale}}%
    \fi%
  \else%
    \setlength{\unitlength}{\svgwidth}%
  \fi%
  \global\let\svgwidth\undefined%
  \global\let\svgscale\undefined%
  \makeatother%
  \begin{picture}(1,0.23014704)%
    \put(0,0){\includegraphics[width=\unitlength,page=1]{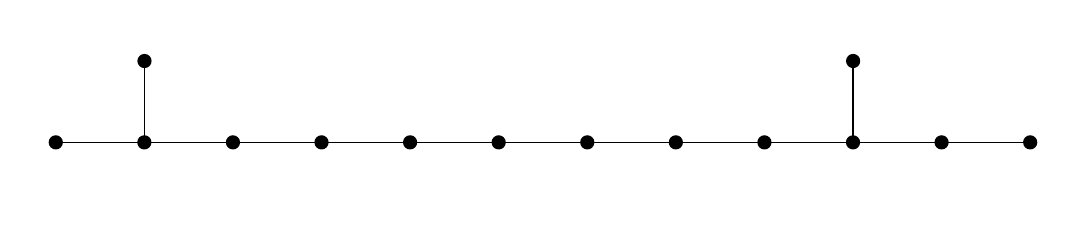}}%
    \put(0.12494773,0.04669121){\color[rgb]{0,0,0}\makebox(0,0)[lb]{\smash{$1$}}}%
    \put(0.289714,0.04669121){\color[rgb]{0,0,0}\makebox(0,0)[lb]{\smash{$2$}}}%
    \put(0.20839672,0.04669121){\color[rgb]{0,0,0}\makebox(0,0)[lb]{\smash{$9$}}}%
    \put(0.0425386,0.04669121){\color[rgb]{0,0,0}\makebox(0,0)[lb]{\smash{$7$}}}%
    \put(0.45479222,0.04669121){\color[rgb]{0,0,0}\makebox(0,0)[lb]{\smash{$3$}}}%
    \put(0.62247009,0.04669121){\color[rgb]{0,0,0}\makebox(0,0)[lb]{\smash{$4$}}}%
    \put(0.52950638,0.04669121){\color[rgb]{0,0,0}\makebox(0,0)[lb]{\smash{$11$}}}%
    \put(0.36364825,0.04669121){\color[rgb]{0,0,0}\makebox(0,0)[lb]{\smash{$10$}}}%
    \put(0.78838023,0.04669121){\color[rgb]{0,0,0}\makebox(0,0)[lb]{\smash{$5$}}}%
    \put(0.95314656,0.04669121){\color[rgb]{0,0,0}\makebox(0,0)[lb]{\smash{$6$}}}%
    \put(0.86018298,0.04669121){\color[rgb]{0,0,0}\makebox(0,0)[lb]{\smash{$14$}}}%
    \put(0.69432473,0.04669121){\color[rgb]{0,0,0}\makebox(0,0)[lb]{\smash{$12$}}}%
    \put(0.81791243,0.16523555){\color[rgb]{0,0,0}\makebox(0,0)[lb]{\smash{$13$}}}%
    \put(0.15863928,0.16523555){\color[rgb]{0,0,0}\makebox(0,0)[lb]{\smash{$8$}}}%
    \put(0,0){\includegraphics[width=\unitlength,page=2]{H283.pdf}}%
  \end{picture}%
\endgroup%
\caption{The graph~$H(2,8,3)$ with a bipartite ordering of its vertices.}
\label{H283} 
\end{center}
\end{figure}
This number equals the spectral radius of the Coxeter transformation associated with the graph~$H(2,8,3)$, depicted in Figure~\ref{H283}. 
In fact, using Equations~(\ref{eq1})-(\ref{eq3}) from the proof of Lemma~\ref{Coxeterrecursion}, one 
can compute the characteristic polynomial of the Coxeter transformation of~$H(2,8,3)$ as the determinant of a block matrix involving 
identity matrices and the matrix~$X$ which is defined via a bipartite adjacency matrix of the graph. In the case of~$H(2,8,3)$, with the  
numbering of the vertices indicated in Figure~\ref{H283}, the matrix~$X$ becomes
\[
\begin{pmatrix}
1 & 1 & 1 & 0 & 0 & 0 & 0 & 0\\
0 & 0 & 1 & 1 & 0 & 0 & 0 & 0\\
0 & 0 & 0 & 1 & 1 & 0 & 0 & 0\\
0 & 0 & 0 & 0 & 1 & 1 & 0 & 0\\
0 & 0 & 0 & 0 & 0 & 1 & 1 & 1\\
0 & 0 & 0 & 0 & 0 & 0 & 0 & 1
\end{pmatrix}.
\]
A straightforward computation yields that the characteristic polynomial of the Coxeter transformation is given by 
\[
t^{14} + t^{13} - t^{12} - 2t^{11} - t^{10} - t^4 - 2t^3 - t^2 + t + 1,
\]
which factors as
\[
(t^8 - t^7 + t^6 - 2t^5 + t^4 - 2t^3 + t^2 - t + 1)(t^4 - t^2 + 1)(t + 1)^2.
\]
In particular, we see that the spectral radius of the Coxeter transformation associated with~$H(2,8,3)$ equals 
the Salem number with minimal polynomial~$p(t)$. 

\begin{proposition}
\label{353nosr}
The growth rate of the tetrahedral group~$[3,5,3]$ is not equal to the spectral radius of a Coxeter transformation. 
\end{proposition}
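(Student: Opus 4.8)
The plan is to argue by contradiction through the spectral correspondence \eqref{eq:eigen-spectral}. Suppose $\tau:=\tau_{[3,5,3]}$ were the spectral radius of the Coxeter transformation $C_\Gamma$ of some connected Coxeter diagram $\Gamma$. Then the leading eigenvalue $\alpha$ of the Coxeter adjacency matrix $A=A(\Gamma)$ would satisfy $\alpha^2=2+\tau+\tau^{-1}$, so that $\alpha=\sqrt{\tau}+1/\sqrt{\tau}$ is a completely prescribed algebraic number. The whole statement thus reduces to showing that \emph{this particular} $\alpha$ is not the Perron--Frobenius eigenvalue of any Coxeter adjacency matrix.

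First I would pin down the arithmetic of $\alpha$. Since $\tau$ is a Salem number, $\mu:=\tau+\tau^{-1}$ is a totally real algebraic integer of degree $5$; dividing the minimal polynomial $t^{10}-t^9-t^6+t^5-t^4-t+1$ by $t^5$ and substituting $t+t^{-1}=\mu$ yields its minimal polynomial $\mu^5-\mu^4-5\mu^3+4\mu^2+4\mu-1$. Consequently $\alpha^2=\mu+2$ is a root of $y^5-11y^4+43y^3-70y^2+40y-1$, and $\alpha$ is a root of the \emph{even} polynomial $P(x)=x^{10}-11x^8+43x^6-70x^4+40x^2-1$, with $\alpha\approx 2.0227$. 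Two features are decisive: $\alpha$ lies only just above $2$, and $P$ is even, so $-\alpha$ is a Galois conjugate of $\alpha$. All conjugates of $\alpha$ are real and lie in $[-\alpha,\alpha]$, the only ones of absolute value exceeding $2$ being $\pm\alpha$ themselves.

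The heart of the argument is then to show that no Coxeter diagram realises $\alpha$ as its adjacency spectral radius. Because $P$ is even, $\Gamma$ must be bipartite: a connected non-bipartite diagram has a strictly dominant Perron eigenvalue, so the conjugate $-\alpha$ (of the same absolute value as $\alpha$) could not occur in the spectrum. Since $\alpha\approx 2.0227$ lies below the smallest limit point $\sqrt{2+\sqrt5}\approx 2.0582$ of spectral radii of graphs, the possibilities for $\Gamma$ with all edge weights equal to $3$ are severely constrained and fall into the classified families of graphs with adjacency spectral radius in $(2,\sqrt{2+\sqrt5})$, essentially paths carrying a controlled number of pendant vertices. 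For each such family one makes the characteristic polynomial of $A$ explicit via the recursion of Lemma~\ref{Coxeterrecursion} and checks that $P(x)$ never divides it, equivalently that the $[3,5,3]$-Salem polynomial never divides $\Phi_\Gamma$. Diagrams with an edge of weight $\ge 4$ are treated separately: a single such edge contributes an adjacency entry $\ge\sqrt2$, and such heavy decorations force the spectral radius above $\sqrt{2+\sqrt5}$ unless they are few and isolated, leaving only a short finite list to rule out by direct computation.

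I expect the main obstacle to be precisely this final case analysis: organising the classification of small-spectral-radius diagrams, including the weighted ones, into finitely many explicit families, and verifying in each that the degree-$10$ polynomial $P(x)$ fails to divide the relevant characteristic polynomial. The contrast with $[4,3,5]$, whose Salem number \emph{is} realised as the spectral radius of the Coxeter transformation of $H(2,8,3)$, shows that such divisibility can genuinely succeed; hence the proof must exploit the specific arithmetic of $P(x)$ and cannot rest on a soft dimension or degree count alone.
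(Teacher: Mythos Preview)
Your architecture matches the paper's: pass from the Coxeter spectral radius to the adjacency spectral radius $\alpha$ via \eqref{eq:eigen-spectral}, observe that $\alpha\approx 2.0227<\sqrt{2+\sqrt5}$, and then confront $\alpha$ with the Brouwer--Neumaier classification of graphs with adjacency spectral radius in $(2,\sqrt{2+\sqrt5})$. The paper executes this last step (Lemma~\ref{notree}) with nothing more than monotonicity under subgraphs and edge subdivision plus a short table of numerical values; your proposed divisibility check of $P(x)$ against $\Phi_\Gamma$ would also work but is heavier than necessary.

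The genuine gap is the reduction from an \emph{arbitrary} Coxeter diagram to a simply-laced tree, which is what the Brouwer--Neumaier list requires. Two points. First, your bipartiteness argument relies on $-\alpha$ being a Galois conjugate of $\alpha$; that forces $-\alpha$ into the spectrum of $A(\Gamma)$ only when $\det(tI-A)$ has rational coefficients, i.e.\ when all edge weights are $3$. As soon as a weight $m\ge4$ appears, the off-diagonal entry $2\cos(\pi/m)$ is irrational and the argument no longer bites --- so bipartiteness does not help you dispose of the weighted case (and once you are in the simply-laced case it is redundant, since the classification already yields trees). Second, your treatment of weights $\ge4$ (``such heavy decorations force the spectral radius above $\sqrt{2+\sqrt5}$ unless they are few and isolated, leaving only a short finite list'') is a plausibility remark, not an argument: a single weight-$4$ edge at a leaf contributes only $\sqrt2$ and certainly does not push $\alpha$ past $\sqrt{2+\sqrt5}$. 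The paper closes this gap in Lemma~\ref{treesr} by invoking McMullen's list of the $38$ minimal hyperbolic Coxeter systems: among minimal hyperbolic trees only $\mathrm{Star}(2,4,5)$ and $\mathrm{Star}(2,3,7)$ have Coxeter spectral radius below $1.35999$, so any $\Gamma$ in that range with a weight $\ge4$ can only carry weight-$4$ edges at leaves; each such leaf is then replaced by two weight-$3$ leaves without changing the adjacency spectral radius. That reduction is the missing step in your proposal.
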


The growth rate $\tau_{[3,5,3]}$ of the compact Coxeter tetrahedron~$[3,5,3]$ is equal to the Salem 
number~$\lambda_0\approx1.350980$ with minimal polynomial
\[
t^{10}-t^9-t^6+t^5-t^4-t+1,
\]
(see Section \ref{section3-1}). Our proof of Proposition~\ref{353nosr} is based on 
McMullen's classification of minimal hyperbolic Coxeter systems (see Section \ref{section3-1}). 
Firstly, we note that by Proposition~7.5 of McMullen~\cite{McM02}, if a Coxeter transformation has spectral 
radius smaller than the golden ratio $(1+\sqrt{5})/2$, then it is the Coxeter transformation associated with a Coxeter diagram whose underlying abstract graph is a tree, with no 
restriction on edge weights. 
We need a 
slightly stronger statement, given by the following lemma.

\begin{lemma}
\label{treesr}
If~$\lambda<1.35999$ is the spectral radius of a Coxeter transformation, then~$\lambda$
is also the spectral radius of a Coxeter transformation of a tree with constant edge weights all equal to~$3$.
\end{lemma}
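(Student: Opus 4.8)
The plan is to recast everything in terms of the Coxeter adjacency matrix and then to show that the weighted tree furnished by McMullen is already unweighted. Recall from \eqref{eq:eigen-spectral} that the spectral radius~$\lambda$ of a Coxeter transformation and the leading eigenvalue~$\alpha$ of its Coxeter adjacency matrix satisfy $\alpha^2=2+\lambda+\lambda^{-1}$, and that $\lambda>1$ is equivalent to $\alpha>2$. Since $\lambda\mapsto 2+\lambda+\lambda^{-1}$ is strictly increasing for $\lambda>1$, the hypothesis $\lambda<1.35999$ translates into $2<\alpha<\alpha_0$, where $\alpha_0:=\sqrt{2+1.35999+1.35999^{-1}}\approx2.023682$. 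As $1.35999$ is smaller than the golden ratio $(1+\sqrt5)/2$, McMullen's Proposition~7.5 in \cite{McM02} supplies a Coxeter tree~$\Gamma$, with a priori arbitrary edge weights, whose Coxeter transformation has spectral radius~$\lambda$. It therefore suffices to prove that every edge of~$\Gamma$ has weight~$3$, for then $\Gamma$ itself is the desired tree.

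The heart of the matter is the claim that every Coxeter tree carrying at least one edge of weight~$\geq4$ and having $\alpha>2$ satisfies $\alpha>\alpha_0$. Granting this, $\Gamma$ (which has $\alpha<\alpha_0$) can have no edge of weight~$\geq4$, so all its weights are~$3$. To prove the claim I would argue by monotonicity: the Coxeter adjacency matrices are nonnegative, symmetric and, on connected diagrams, irreducible, so by Perron--Frobenius $\alpha$ strictly increases when an edge weight is raised or a vertex is adjoined. Hence it is enough to bound $\alpha$ from below on the minimal trees of this type, namely those Coxeter trees~$\Gamma_0$ with a weight-$\geq4$ edge and $\alpha>2$ from which no vertex can be deleted while keeping both properties. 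Removing from~$\Gamma_0$ a leaf that is not an endpoint of the distinguished edge (such a leaf exists since $\Gamma_0$ has at least three vertices) leaves a connected diagram still carrying that edge and, by minimality, having $\alpha\le2$; by the classification of the Coxeter diagrams with $\alpha\le2$ into finite ($\alpha<2$) and affine ($\alpha=2$) types \cite{Hum}, that diagram is finite or affine. Thus $\Gamma_0$ is a one-vertex extension of a finite or affine Coxeter diagram carrying a weight~$\geq4$. The affine candidates are $\tilde B_n,\tilde C_n,\tilde F_4$ (weight~$4$) and $\tilde G_2$ (weight~$6$); since no affine diagram has a weight~$5$, the weight-$5$ case can only arise by extending a finite diagram $H_3$ or~$H_4$.

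It then remains to compute $\alpha$ on this finite list of one-vertex extensions, which I would do through the transfer-matrix, i.e.\ continued-fraction, form of the eigenvalue equation $Ax=\alpha x$ along the pendant paths. The minimum is attained by the weight-$5$ path $[5,3,3,3]$, the one-vertex extension of $H_4=[5,3,3]$, for which
\[
\alpha^2=\tfrac12\big((4+\varphi)+\sqrt{5+\varphi}\big),\qquad \varphi=\tfrac{1+\sqrt5}{2},
\]
whence $\alpha\approx2.0236835$ and the associated spectral radius is $\lambda\approx1.359999$; in fact one checks $\lambda=\tau_{[4,3,5]}$. The weight-$4$ extensions all give $\alpha\geq2.028$ (the smallest being the extension of~$\tilde F_4$), and the weight-$6$ extensions of~$\tilde G_2$ give $\alpha\geq2.07$. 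Since $\tau_{[4,3,5]}\approx1.359999>1.35999$, equivalently $2.0236835>\alpha_0\approx2.0236823$, the claim follows, and with it the lemma.

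The main obstacle will be the extreme tightness of the weight-$5$ estimate: the least attainable value $\tau_{[4,3,5]}$ exceeds the threshold $1.35999$ only from the sixth decimal onward, which is exactly why the bound in the statement is taken just below~$\tau_{[4,3,5]}$. A second delicate point is the completeness of the enumeration, in particular for the infinite families $\tilde B_n$ and~$\tilde C_n$: one must verify that the leading eigenvalues of their one-vertex extensions stay bounded away from~$2$ uniformly in~$n$, the relevant limits as $n\to\infty$ being $\geq2.05$, so that no large-rank weighted tree slips an eigenvalue into the window~$(2,\alpha_0)$.
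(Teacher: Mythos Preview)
Your central claim---that every Coxeter tree carrying an edge of weight~$\ge4$ and with $\alpha>2$ must satisfy $\alpha>\alpha_0$---is false, and this is a genuine gap rather than a technicality. Take $\tilde B_n$ for large $n$ (a path with a weight-$4$ edge at one end and a fork of two weight-$3$ leaves at the other) and attach one further weight-$3$ leaf to one of the two fork leaves. The resulting tree $\Gamma'_n$ is minimal in exactly your sense: removing the non-distinguished leaf at the fork gives $B_m$ (finite), and removing the new leaf gives back $\tilde B_n$ (affine). Yet $\Gamma'_n$ has the \emph{same} Coxeter-adjacency spectral radius as the unweighted tree $H(2,n{-}3,3)$, because a weight-$4$ leaf contributes $\sqrt2$ to the adjacency row, and two weight-$3$ leaves contribute $1+1$; one checks that the Perron eigenvectors match after rescaling the leaf coordinate by $\sqrt2$. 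From Table~\ref{tab:sr} the spectral radius of $H(2,10,3)$ is $\approx2.02210<\alpha_0\approx2.02368$, and the sequence is decreasing in $j$. Hence your assertion that ``the weight-$4$ extensions all give $\alpha\ge2.028$'' and that the $\tilde B_n,\tilde C_n$ limits are $\ge2.05$ is incorrect: there are infinitely many minimal weight-$4$ trees with $\alpha$ strictly inside the window $(2,\alpha_0)$, so one cannot conclude that $\Gamma$ itself has only weight-$3$ edges.

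The paper's proof does not try to rule out weight-$4$ edges on the original tree. Instead it uses McMullen's list of minimal hyperbolic diagrams to show that, under the bound $\lambda<1.35999$, any weight $\ge4$ must be exactly $4$ and must sit on an edge incident to a leaf; it then \emph{replaces} each such weight-$4$ leaf by two weight-$3$ leaves, producing a different, strictly unweighted tree with the same adjacency spectral radius (and hence the same $\lambda$). The replacement step is precisely the observation used above to build the counterexample to your claim, and it is what your argument is missing.
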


\begin{proof}
By Proposition~7.5 of McMullen~\cite{McM02}, we know that~$\lambda$ must be the spectral radius of a Coxeter transformation of a tree.
We now want to show that we can assume the tree to have constant edge weights all equal to~$3$.
To this end we assume that~$\lambda<1.35999$ is the spectral radius of a Coxeter transformation of a tree~$\Gamma$ with at least one edge weight~$\ge4$. 
By McMullen's classification of the 38 minimal hyperbolic Coxeter diagrams~\cite{McM02}, the Coxeter tree~$\Gamma$ must dominate 
either the~$\mathrm{Star}(2,4,5)$ or the~$\mathrm{Star}(2,3,7)$. Indeed, the Coxeter transformation of all the other minimal hyperbolic Coxeter trees have larger spectral radii. 

Now, at least one edge of~$\Gamma$ must have weight~$\ge4$. The only possibility for this to happen is if the weight is exactly~$4$ and the edge weighted~$4$ is adjacent to a leaf of~$\Gamma$. Indeed, in all other cases, a minimal diagram given in 
\cite[Table~5]{McM02} other than~$\mathrm{Star}(2,4,5)$ or~$\mathrm{Star}(2,3,7)$ would be 
dominated by~$\Gamma$, and hence the spectral radius would have to be larger than~$1.35999$. 

The result now follows from the following observation: a leaf~$v$ that is connected to a vertex~$w$ by an edge 
of weight~$4$ can be replaced by two leaves~$v_1$ and~$v_2$ that are both connected to~$w$ by an edge of weight~$3$, without changing the spectral radius of the 
adjacency matrix. Hence the spectral radius of the Coxeter transformation does not change by this replacement, since it is uniquely determined by the spectral radius of the adjacency matrix.
Let~$\Gamma'$ be the result of this replacement, and assume the vertices~$w$ and~$v$ are the two last ones with respect to the numbering for the adjacency matrix.
Then it can be verified directly that if~$(v_1, \dots, v_r, x, y)^\top$ is the Perron-Frobenius eigenvector of the adjacency matrix of~$\Gamma$, 
the vector~$(v_1,\dots,v_r,x,\frac{y}{\sqrt{2}},\frac{y}{\sqrt{2}})^\top$ is the Perron-Frobenius eigenvector of the adjacency matrix of~$\Gamma'$ and the two Perron-Frobenius eigenvalues agree. 
In particular, the spectral radii of the adjacency matrices of~$\Gamma$ and~$\Gamma'$ agree. 
\end{proof}

The second ingredient we need for the proof of Proposition~\ref{353nosr} is purely graph-theoretical and follows from the classification of trees whose adjacency matrices 
have small spectral radii. We stress that we deal with \emph{graph-theoretical} adjacency matrices here, that is, all coefficients are nonnegative integers. 

\begin{lemma}
\label{notree}
Let~$\alpha_0>2$ be defined by~$\alpha_0^2 = \lambda_0 + \lambda_0^{-1} +2$, where~$\lambda_0 \approx 1.350980$ is the growth rate of the Coxeter tetrahedron~$[3,5,3]$. Then~$\alpha_0$ is not the spectral radius of an adjacency matrix of a graph.
\end{lemma}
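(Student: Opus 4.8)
The plan is to identify $\alpha_0$ as a concrete algebraic integer lying in the narrow band just above $2$, and then to play it off against the classification of trees whose adjacency matrix has small spectral radius. First I would compute the minimal polynomial of $\alpha_0$. Writing $\mu_0=\lambda_0+\lambda_0^{-1}$, the Salem polynomial $t^{10}-t^9-t^6+t^5-t^4-t+1$ of $\lambda_0$ is reciprocal, so dividing by $t^5$ and substituting $\mu=t+t^{-1}$ shows that $\mu_0$ is a root of $\mu^5-\mu^4-5\mu^3+4\mu^2+4\mu-1$. The defining relation $\alpha_0^2=\mu_0+2$ is exactly \eqref{eq:eigen-spectral} for $\lambda=\lambda_0$, so substituting $\mu=t^2-2$ yields that $\alpha_0$ is a root of
\[
R(t)=t^{10}-11t^{8}+43t^{6}-70t^{4}+40t^{2}-1 .
\]
Numerically $\alpha_0\approx 2.02267$, and since $\alpha_0^2=\mu_0+2\approx 4.0912$ lies strictly between $4$ and $2+\sqrt5\approx 4.236$, we have $2<\alpha_0<\sqrt{2+\sqrt5}$. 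The same computation records all conjugates of $\alpha_0$: the five roots $\mu_i$ of the quintic are real and lie in $[-2,\mu_0]$, so the conjugates of $\alpha_0$ are the real numbers $\pm\sqrt{\mu_i+2}$, each of absolute value at most $\alpha_0$. Thus the naive obstructions (total reality, dominance of $\alpha_0$ among its conjugates) are all satisfied, which shows a structural argument is unavoidable.

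Next I would set up the reduction. Suppose for contradiction that $\alpha_0$ is the spectral radius of the adjacency matrix $A$ of a graph; passing to a component attaining the maximum, we may assume the graph connected, whence $\alpha_0$ is its Perron eigenvalue and $R$ divides $\chi_A\in\mathbb{Z}[t]$. Because $2<\alpha_0<\sqrt{2+\sqrt5}$, the graph cannot contain a cycle: a pure cycle has spectral radius exactly $2$, while a connected graph properly containing a cycle already has spectral radius $\ge\sqrt{2+\sqrt5}$ (it contains a cycle with a pendant edge, whose spectral radius exceeds $\sqrt{2+\sqrt5}$). Hence the graph is a tree, and by the classification of trees with spectral radius below $\sqrt{2+\sqrt5}$ it belongs to an explicit list of caterpillar-type trees with at most two vertices of degree three. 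I would record the bridge to the rest of the paper here: assigning weight $3$ to every edge turns $A$ into the Coxeter adjacency matrix, and \eqref{eq:eigen-spectral} together with the injectivity of $\lambda\mapsto\lambda+\lambda^{-1}$ on $(1,\infty)$ forces the associated Coxeter transformation to have spectral radius exactly $\lambda_0=\tau_{[3,5,3]}$. Note that the adjacency threshold $\sqrt{2+\sqrt5}$ corresponds under \eqref{eq:eigen-spectral} precisely to the golden ratio $(1+\sqrt5)/2$, so this tree reduction is also Proposition~7.5 of McMullen~\cite{McM02}; since $\lambda_0<1.35999$, the same minimal-diagram argument as in Lemma~\ref{treesr} shows the tree must dominate $\mathrm{Star}(2,3,7)$ or $\mathrm{Star}(2,4,5)$ and no other minimal hyperbolic diagram.

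Finally I would carry out the exclusion. For each admissible family, the spectral radius is the largest root of a characteristic polynomial produced by the determinant/recursion method of Lemma~\ref{Coxeterrecursion} (equivalently, a transfer-matrix recursion along each arm). As an arm length grows, the radius increases strictly and converges to a limit that I would compute in closed form and check to be different from $\alpha_0$; consequently each family contributes at most one candidate, whose characteristic polynomial I would then test for divisibility by $R$. Since $\deg R=10$, only trees on at least ten vertices can occur, which bounds the finite verification. The main obstacle is exactly this bookkeeping: organising the (a priori infinite, but recursively structured) list of trees into finitely many families, computing each limiting radius, and verifying in every remaining case that $R$ does not divide the characteristic polynomial — equivalently, that $\lambda_0$ is not the Coxeter-transformation spectral radius of any such tree. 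Once this check is complete the assumed graph cannot exist, which proves the lemma and, combined with Lemma~\ref{treesr} and the correspondence above, yields Proposition~\ref{353nosr}.
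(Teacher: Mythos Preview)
Your overall strategy matches the paper's: locate $\alpha_0$ in the interval $(2,\sqrt{2+\sqrt5})$, invoke the Brouwer--Neumaier classification of graphs with spectral radius in that range, and then exclude every family on the resulting list. Your extra step of computing the minimal polynomial $R(t)$ of $\alpha_0$ is a pleasant algebraic refinement and would allow divisibility checks in place of numerical comparisons, but it is not how the paper proceeds: the paper simply tabulates a handful of approximate spectral radii and uses two monotonicity principles (subgraph monotonicity and the fact that subdividing a non-endpath edge does not increase the spectral radius) to squeeze $\alpha_0$ out of each family.

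There is one genuine gap in your plan. You write that ``as an arm length grows, the radius increases strictly and converges to a limit $\ne\alpha_0$, so each family contributes at most one candidate.'' This is not how the $H(i,j,k)$ families behave: for fixed $i,k$ the spectral radius is \emph{decreasing} in the middle length $j$ (edge subdivision), so you cannot pin down a single candidate by an increasing-limit argument. The paper handles this by checking, for instance, that $H(2,10,3)$ already has spectral radius $<\alpha_0$ while $H(2,9,3)$ has spectral radius $>\alpha_0$, and similarly for $H(3,j,3)$, so that the decreasing sequence jumps over $\alpha_0$; the star families are dispatched by showing that the smallest member (e.g.\ $\mathrm{Star}(3,3,4)$, $\mathrm{Star}(2,5,5)$, $\mathrm{Star}(2,4,6)$) already exceeds $\alpha_0$, or that the whole $(2,3,r)$ family sits below $\alpha_0$ because each member embeds in some $H(2,j,3)$ with $j\ge10$. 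In short, the ``bookkeeping'' you defer is the entire content of the lemma, and your proposed mechanism for organising it (uniform increasing monotonicity to a limit) does not apply to the $H$-graphs; you need the two-sided bracketing argument that the paper actually carries out.
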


\begin{proof}
We note that~$\alpha_0 \approx 2.0226674 < \sqrt{2+\sqrt{5}}$ and use the classification of graphs with spectral radius 
smaller than~$\sqrt{2+\sqrt{5}}$, due to Brouwer and Neumaier~\cite{Brouwer89}. This classification states that the
graphs whose adjacency matrices have spectral radii strictly in between~$2$ and~$\sqrt{2+\sqrt{5}}$ are the following:
\begin{enumerate}
\item $\mathrm{Star}(p,q,r)$ where~$(p,q,r)$ is among
	\begin{enumerate}
	\item $(2,3,r)$ with~$r\ge7$,
	\item $(2,4,r)$ with~$r\ge5$,
	\item $(2,q,r)$ with~$q\ge r\ge5$,
	\item $(3,3,r)$ with~$r\ge 4$,
	\item $(3,4,4)$. 
	\end{enumerate}
\item $H(i,j,k)$ where~$(i,j,k)$ is among
	\begin{enumerate}
	\item $(i,j,k)$ with~$j\ge i+k$,
	\item $(3,j,k)$ with~$j\ge k+2$,
	\item $(2,j,k)$ with~$j\ge k-1$,
	\item $(2,1,3)$, $(3,4,3)$, $(3,5,4)$, $(4,7,4)$ or $(4,8,5)$. 
	\end{enumerate}
\end{enumerate}

To finish the proof, we use the values of spectral radii of adjacency matrices depicted in Table~\ref{tab:sr}. 

\renewcommand{\arraystretch}{1.2}
\begin{table}[h]
  \centering
  \begin{tabular}{c|c}
    Graph & approx.\ spectral radius of the adjacency matrix \\
    \hline
    $\mathrm{Star}(2,4,5)$ & 2.0153161  \\
    $\mathrm{Star}(2,4,6)$ & 2.0236833  \\
    $\mathrm{Star}(2,5,5)$ & 2.0285235  \\
    $\mathrm{Star}(3,3,4)$  & 2.0285235  \\
    $H(2,9,3)$ & 2.0227871   \\
    $H(2,10,3)$ & 2.0220988  \\
    $H(3,20,3)$ & 2.0227871 \\
    $H(3,21,3)$ & 2.0224205
  \end{tabular}
  \caption{Some approximate spectral radii of graphs.}
  \label{tab:sr}
\end{table}
We first deal with the cases of the star graphs, and use that the spectral radii of adjacency matrices 
are monotonic with respect to taking subgraphs, see, for example, Proposition~3.1.1 in~\cite{BrHa}. 

The stars $(3,4,4)$ and $(3,3,r)$ with~$r\ge 4$ have $(3,3,4)$ as a subgraph, and hence the spectral radius of their adjacency matrix is~$> \alpha_0$.
This deals with the cases 1(d) and 1(e). 

The stars $(2,q,r)$ with~$q\ge r\ge5$ have $(2,5,5)$ as a subgraph and hence the spectral radius of their adjacency matrix is~$> \alpha_0$. This deals with case 1(c). 

The star $(2,4,5)$ has spectral radius~$<\alpha_0$. Furthermore, all stars $(2,4,r)$ for~$r\ge6$ have~$(2,4,6)$ as a subgraph and hence the spectral radius of 
their adjacency matrix is~$>\alpha_0$. This deals with the case 1(b). 

In order to treat the case 1(a), we note that the sequence of spectral radii of the adjacency matrices of the graphs~$H(2,j,3)$ is monotonically decreasing in~$j$. 
This follows from the fact that subdividing an edge that does not lie on an endpath does not increase the spectral radius, see, for example, Proposition 3.1.4 in~\cite{BrHa}. 
From Table~\ref{tab:sr} 
we obtain that~$H(2,10,3)$ is smaller than~$\alpha_0$ and hence so is the spectral radius of~$H(2,j,3)$ for all~$j\ge10$. In particular, since every star of the type 
$(2,3,r)$ is the subgraph of a graph~$H(2,j,3)$ for~$j$ large enough, also the spectral radii of the stars~$(2,3,r)$ are smaller than~$\alpha_0$.

We now deal with the graphs of type~$H(i,j,k)$. 

As soon as~$i$ or~$k$ is~$\ge 4$ and~$j\ne3$, it follows by the classification that the star of type~$(2,4,6)$ is a subgraph. 
Hence the spectral radius is~$>\alpha_0$, which must also be the case for~$j=3$, since the spectral radius is monotonically decreasing in~$j$. 
This eliminates many cases. Up to graph isomorphism, the only cases that we still have to consider are the following ones. 

\begin{enumerate}
\item[(i)] $H(2,j,3)$ where~$j\ge1$,
\item[(ii)] $H(3,j,3)$ where~$j\ge 4$. 
\end{enumerate}

In both cases, the spectral radius is again a decreasing sequence in the parameter~$j$. Hence, in both cases the values given in Table~\ref{tab:sr} suffice 
to exclude that a spectral radius of the adjacency matrix of a graph of type~$H(i,j,k)$ equals~$\alpha_0$. 
This concludes the proof.
\end{proof}

We are now ready to prove Proposition~\ref{353nosr}.

\begin{proof}[Proof of Proposition~\ref{353nosr}]
We want to show that~$\lambda_0\approx1.350980$ is not the spectral radius of a Coxeter transformation. 
By Lemma~\ref{treesr}, if~$\lambda_0$ was the spectral radius of a Coxeter transformation, then there would exist 
a tree~$\Gamma$ with constant edge weights all equal to~$3$ and such that~$\lambda_0$ is the spectral radius 
of the Coxeter transformation associated with~$\Gamma$. 
The spectral radius~$\lambda$ of the Coxeter transformation is related to the spectral radius~$\alpha$ 
of the adjacency matrix of~$\Gamma$ by the identity \eqref{eq:eigen-spectral}.
We note that in the case of constant edge weights equal to 3, the Coxeter adjacency matrix equals the graph-theoretic adjacency matrix of the tree.
But then, by Lemma~\ref{notree}, the equation \eqref{eq:eigen-spectral}
does not have a solution among trees for~$\lambda=\lambda_0$. 
It follows that~$\lambda_0$  cannot be the spectral radius of a Coxeter transformation. 
\end{proof}


\begin{thebibliography}{ASM}

\bibitem{AC76} N. A'Campo,
{\it Sur les valeurs propres de la transformation de Coxeter}
{\bf 33} (1976), no. 1, 61--67.

\bibitem{BrHa} A. Brouwer, W. Haemers,
{\it Spectra of {G}raphs}, 
Universitext, Springer, Berlin, 2012.

\bibitem{Brouwer89} A. Brouwer, A. Neumaier,
{\it The graphs with spectral radius between $2$ and $\sqrt{2+\sqrt{5}}$}, Linear Algebra and its Applications {\bf 114/115}
(1989), 273--276.

\bibitem{Cannon} J. Cannon,
{\it The growth of the closed surface groups and the compact hyperbolic {C}oxeter groups}, preprint (1980), 38 pp.

\bibitem{CaWag} J. Cannon, P.  Wagreich,
{\it Growth functions of surface groups},
Math. Ann. {\bf 293} (1992), no. 2, 239--257.

\bibitem{CD1991} R. Charney, M. Davis,
{\it Reciprocity of growth functions of {C}oxeter groups}
Geom. Dedicata {\bf 39} (1991), no. 3, 373--378.

\bibitem{CoMo} H. Coxeter, W. Moser,
{\it Generators and {R}elations for {D}iscrete {G}roups}
Ergebnisse der Mathematik und ihrer Grenzgebiete [Results in Mathematics and Related Areas], Vol. 14,
Springer-Verlag, Berlin-New York, 1980.

\bibitem{F-web} A. Felikson,
{\it Hyperbolic {C}oxeter polytopes},
{\url{http://www.maths.dur.ac.uk/users/anna.felikson/Polytopes/polytopes.html}}.

\bibitem{FT1} A. Felikson, P.  Tumarkin,
{\it On hyperbolic {C}oxeter polytopes with mutually intersecting facets}, J. Combin. Theory Ser.  A {\bf 115} (2008), no. 1, 121--146.

\bibitem{Floyd} W. Floyd, 
{\it Growth of planar {C}oxeter groups, {P}.{V}. numbers, and
{S}alem numbers}, Math. Ann. {\bf 293} (1992), no. 3, 475--483.

\bibitem{CoxIter} R. {Guglielmetti},
{\it CoxIter -- computing invariants of hyperbolic Coxeter groups}, LMS Journal of Computation and Mathematics {\bf 18}
(2015), no. 1, 754--773.

\bibitem{Eko} E. Hironaka,
{\it The {L}ehmer polynomial and pretzel links},
Canad. Math. Bull. {\bf 44} (2001), no.4, 440--451.

\bibitem{Eko03} E. Hironaka, 
{\it Lehmer's problem, {M}c{K}ay's correspondence, and {$2,3,7$}}, 
Topics in algebraic and noncommutative geometry ({L}uminy/{A}nnapolis, {MD}, 2001),
Contemp. Math., Vol. 324, Amer. Math. Soc., Providence, RI, 2003, pp. 123--138.

\bibitem{Eko04} E. Hironaka,
{\it Chord diagrams and {C}oxeter links},
J. London Math. Soc. (2) {\bf 69} (2004), no. 1, 243--257.

\bibitem{How} R. Howlett,
{\it Coxeter groups and {$M$}-matrices},
Bull. London Math. Soc.
{\bf 14} (1982), no. 2, 137--141.

\bibitem{Hum} J. Humphreys,
{\it Reflection {G}roups and {C}oxeter {G}roups},
Cambridge Studies in Advanced Mathematics,
Vol. 29, Cambridge University Press, Cambridge, 1990.

\bibitem{JKRT1} N. Johnson, R. Kellerhals, J. Ratcliffe, S. Tschantz, 
{\it The size of a hyperbolic {C}oxeter simplex},
Transform. Groups {\bf 4} (1999), no. 4, 329--353.

\bibitem{Roeder} K. Roeder,  J. Hubbard,  W. Dunbar,
{\it Andreev's theorem on hyperbolic polyhedra},
Ann. Inst. Fourier (Grenoble) {\bf 57} (2007), no. 3, 825--882.

\bibitem{K336} R. Kellerhals, 
{\it Cofinite hyperbolic {C}oxeter groups, minimal growth rate and {P}isot numbers}, Algebr. Geom. Topol. {\bf 13} (2013), no. 2, 1001--1025.

\bibitem{KK} R. Kellerhals, A. Kolpakov, 
{\it The minimal growth rate of cocompact {C}oxeter groups in hyperbolic 3-space}, Canad. J. Math. {\bf 66} (2014), no. 2, 
354--372.

\bibitem{KP} R. Kellerhals, G. Perren,
{\it On the growth of cocompact hyperbolic {C}oxeter groups},
European J. Combin. {\bf 32} (2011), no. 8, 1299--1316.

\bibitem{Komori} Y. Komori,
{\it {C}oxeter garlands in {H}$^4$ and 2-{S}alem numbers},
IML Workshop on Growth and Mahler Measures in Geometry and Topology, Institut Mittag-Leffler, Report No. 1, ISSN 1103-467X,	2013, 11 pp.

\bibitem{Martin} T. Marshall,  G. Martin,
{\it Minimal co-volume hyperbolic lattices, {II}: {S}imple torsion in a {K}leinian group},
Ann. of Math. (2) {\bf 176} (2012), no. 1, 261--301.

\bibitem{McM02} C. McMullen,
{\it Coxeter groups, {S}alem numbers and the {H}ilbert metric},
{Publ. Math., Inst. Hautes \'Etud. Sci.} {\bf 95} (2002), 151--183.

\bibitem{Moss} M. Mossinghoff, 
{\it List of small {S}alem numbers},
\url{http://www.cecm.sfu.ca/~mjm/Lehmer/lists/SalemList.html}.

\bibitem{Parry} W. Parry,
{\it Growth series of {C}oxeter groups and {S}alem numbers},
J.  Algebra {\bf 2} (1993), 406--415.

\bibitem{Perren} G. Perren, 
{\it Growth of cocompact hyperbolic {C}oxeter groups and their rate},
Ph.D. thesis, no. 1656, University of Fribourg, Switzerland, 2009.

\bibitem{Rat-book} J. Ratcliffe, 
{\it Foundations of {H}yperbolic {M}anifolds},
Graduate Texts in Mathematics, Vol. 149, {Springer-Verlag, New York}, 1994.

\bibitem{Smyth} C. Smyth, 
{\it Seventy years of {S}alem numbers},
Bull. Lond. Math. Soc. {\bf 47} (2015), no. 3, 
379--395.

\bibitem{Steinberg59} R. Steinberg,
{\it Finite reflection groups},
Trans.  Am.  Math.  Soc. {\bf 91} (1959), 493--504.

\bibitem{Terragni} T.  Terragni,
{\it On the growth of a {C}oxeter group},
Groups Geom. Dyn. {\bf 10} (2016), no. 2, 
601--618.

\bibitem{Umemoto} Y. Umemoto,
{\it The growth function of {C}oxeter dominoes and 2-{S}alem numbers}, Algebr. Geom. Topol. {\bf 14} (2014), no.5, 
2721--2746.

\bibitem{V1} \`{E}. Vinberg, 
{\it Hyperbolic reflections groups},
Uspekhi Mat. Nauk {\bf 40} (1985), no. 1, 29--66, 255.

\bibitem{VinbergII} \`E. Vinberg, O. Shvartsman, 
{\it Discrete groups of motions of spaces of constant curvature}, Geometry, {II}, Encyclopaedia Math. Sci.,
Vol. 29, Springer, Berlin, 1993, 
pp. 139--248.

\end{thebibliography}
\end{document}